\newtheorem{Thm}{Theorem}
\newtheorem{Prop}[Thm]{Proposition}
\newtheorem*{MainTh}{Theorem }
\newtheorem{Lem}[Thm]{Lemma}
\newtheorem{Cor}[Thm]{Corollary}
\theoremstyle{definition}
\newtheorem{Def}[Thm]{Definition}
\newtheorem{Rem}[Thm]{Remark}
\newtheorem{Ex}{Example}
\newcommand{\Z}{\mathbb{Z}}
\newcommand{\N}{\mathbf{N}}
\newcommand{\E}{\mathbb{E}}
\newcommand{\R}{\mathbb{R}}
\newcommand{\C}{\mathbb{C}}
\newcommand{\HH}{\mathcal{H}}
\newcommand{\bpr}{\noindent \textbf{Proof}: }
\newcommand{\epr}{\begin{flushright}{\large \ding{70}} \end{flushright}}
\renewcommand{\~}[1]{\overline{#1}}
\newcommand{\<}{\left\langle}
\renewcommand{\>}{\right\rangle}
\newcommand{\f}{\varphi}
\newcommand{\norm}{\trianglelefteqslant}
\newcommand{\Rr}{\mathcal{R}}
\newcommand{\Sum}[2]{\underset{#1}{\overset{#2}{\sum} }}
\title{Reduced 1-cohomology and relative property (T)}
\author{Talia Fern\'os, Alain Valette\footnote{With an appendix by Florian Martin and Alain Valette.}}
\date{\today}
\begin{document}
\maketitle
\baselineskip=16pt

\begin{abstract}
Shalom characterized property (T) in terms of the vanishing of all reduced first cohomology. We characterize group pairs having the property that the restriction map on all first reduced cohomology vanishes. We show that, in a strong sense, this is inequivalent to relative property (T).
\end{abstract}

\section{Introduction}

The celebrated Delorme-Guichardet Theorem (see e.g. \cite{BHV}) tells us that Kazhdan's property (T) for a locally compact, $\sigma$-compact group $G$, is equivalent to the vanishing of the first cohomology group $H^1(G,\pi)=Z^1(G,\pi)/B^1(G,\pi)$, for every unitary representation $\pi$. A deep result of Shalom \cite{ShaInv} says that, for $G$ a compactly generated group, this is still equivalent to the vanishing of the first {\it reduced} cohomology $\overline{H}^1(G,\pi)=Z^1(G,\pi)/\overline{B^1(G,\pi)}$,  for every unitary representation $\pi$. The closure is taken in the topology of uniform convergence on compact subsets of $G$.  Shalom used this to prove, among other things, that property (T) is an open property in the space of all finitely generated groups  \cite{ShaInv}.

If $A$ is a closed subgroup of the locally compact $\sigma$-compact group $G$, the relative property (T) for the pair $(G,A)$ can be characterized as the vanishing of the restriction map $Rest_G^A:H^1(G,\pi)\rightarrow H^1(A,\pi|_A)$ for every unitary representation $\pi$ of $G$ (see \cite{Jol}, \cite{Cornu}, \cite{FerTh}). So it is a natural question whether there is a ``relative'' version of Shalom's theorem, i.e. whether (possibly under extra assumptions on $G$ and $A$), the relative property (T) for the pair $(G,A)$ can be characterized by the vanishing of the restriction map on $\overline{H}^1$. Geometrically this vanishing means that every isometric action of $G$ on a Hilbert space almost has $A$-invariant points.

The aim of this paper is to show that the answer to this question is negative, in a very strong sense. We give a characterization for when the image $Rest_G^A:\overline{H}^1(G,\pi)\rightarrow\overline{H}^1(A,\pi|_A)$ is zero for every unitary representation of $G$. Using this characterization, we show that there are many amenable groups with non-compact subgroups which have this property.
On the other hand, if $G$ is amenable and $A$ is a non-compact closed subgroup of $G$, then the pair $(G,A)$ cannot have relative property (T).\footnote{We give a short proof of this fact in Proposition \ref{Haag} below.}

We consider two relative versions of Shalom's property $(H_T)$. Our main Theorem \ref{main} shows that the weaker one of these, along with a finite dimensional relative-fixed point property (relative property $F\E$) is equivalent to the vanishing of the above restriction map. We exhibit many classes of group pairs which satisfy these properties. As an example:

\begin{Thm}\label{nilpotent}
Let $G$ be a locally compact nilpotent group and $A=\overline{[G,G]}$. For every unitary representation $\pi: G \to U(\HH)$ the image of the restriction map $\overline{H}^1(G,\pi)\rightarrow\overline{H}^1(A,\pi|_A)$ is zero.
\end{Thm}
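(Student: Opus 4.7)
The plan is to invoke the main Theorem~\ref{main}, which asserts that the restriction $Rest_G^A$ vanishes on all reduced $1$-cohomology of $G$ if and only if the pair $(G,A)$ satisfies (i)~a relative weak version of Shalom's property $(H_T)$ and (ii)~a relative finite-dimensional fixed-point property $F\E$. Thus the task reduces to verifying (i) and (ii) when $G$ is locally compact nilpotent and $A=\overline{[G,G]}$.

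Checking (ii) is essentially immediate. For any continuous unitary representation $\pi\colon G\to U(V)$ with $\dim V<\infty$, the closure $\overline{\pi(G)}$ is a closed nilpotent subgroup of the compact group $U(V)$, hence compact. Consequently, under any affine isometric $G$-action on $V$, the $G$-orbit of the origin is bounded; its Chebyshev center is a $G$-fixed (in particular $A$-fixed) vector. So non-relative $F\E$ holds, a fortiori its relative version.

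For (i), fix $\pi\colon G\to U(\HH)$ and $b\in Z^1(G,\pi)$. Since $A$ is normal in $G$, the subspace $\HH^A$ of $A$-invariant vectors and its orthogonal $\HH_A^\perp$ are both $G$-invariant; decompose $b=b_0+b_1$ accordingly. On $\HH^A$ the representation factors through the abelian quotient $G/A$; split further as $\HH^A=\HH^G\oplus(\HH^A\ominus\HH^G)$. On $\HH^G$ the cocycle $b_0$ is a continuous homomorphism into a trivial $G$-module, hence factors through $G^{ab}=G/A$ and vanishes on $A$. On $\HH^A\ominus\HH^G$, decompose via SNAG into nontrivial characters $\chi$ of $G/A$; for each such $\chi$, the cocycle identity gives the twisted equivariance $b_0(gag^{-1})=\chi(g)\,b_0(a)$ for $a\in A$, $g\in G$. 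Combining this with the commutator identity
\begin{equation*}
gag^{-1}\;\equiv\;a\,[a^{-1},g]\pmod{[A,A]}
\end{equation*}
and performing an induction along the lower central series of $G$ (starting from $\gamma_{c+1}(G)=\{1\}$ where the claim is trivial, and peeling back via $\gamma_c(G),\gamma_{c-1}(G),\dots$) forces $b_0|_A=0$ on each character piece, hence on all of $\HH^A$. On $\HH_A^\perp$, where $\pi|_A$ has no $A$-invariant vectors, one combines the amenability of $A$ with Shalom's reduction $(H_{FD})$ to finite-dimensional subrepresentations, together with the $F\E$ step above, to conclude that $b_1|_A$ is a reduced coboundary of $A$.

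The main obstacle lies in (i). The induction along the lower central series that kills $b_0|_A$ is conceptually clean once properly set up. The subtler piece is controlling $b_1|_A$ on $\HH_A^\perp$ in the \emph{reduced} (as opposed to absolute) sense, where coboundaries are only required in the topology of uniform convergence on compacta; a secondary technicality is passing from the character-by-character analysis on $\HH^A\ominus\HH^G$ back to the whole Hilbert space via the direct-integral/SNAG decomposition, which requires appropriate measurability of the approximants being constructed.
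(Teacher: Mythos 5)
Your plan routes the statement through Theorem \ref{main}, but that theorem assumes $A$ is compactly generated, which is not among the hypotheses here ($\overline{[G,G]}$ of a general locally compact nilpotent group need not be compactly generated), so the reduction to your items (i) and (ii) is not available as stated. More seriously, your verification of (ii) is incorrect: for an affine isometric action $\alpha(g)v=\pi(g)v+b(g)$, the orbit of the origin is $\{b(g)\}_{g\in G}$, and compactness of $\overline{\pi(G)}$ (which holds for \emph{any} closed subgroup of $U(V)$ --- nilpotency plays no role) gives no control whatsoever on the translation part $b$. Already $G=\R$ acting on $V=\C$ with trivial linear part and $b(t)=t$ is an affine isometric action of a nilpotent group with unbounded orbits and no fixed point, so there is no ``a fortiori'' passage from an absolute fixed point to relative $F\E$. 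Relative $F\E$ for $(G,\overline{[G,G]})$ is true, but proving it already requires the $(H_T)$-type input (splitting off the $G$-invariant vectors, where the cocycle is a homomorphism vanishing on commutators, and killing $H^1$ on the complement), not a Chebyshev-center argument.

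For (i), what you have written is a plan rather than a proof: the induction along the lower central series on the character pieces of $\HH^A\ominus\HH^G$, the measurability issues in the direct-integral/SNAG decomposition, and above all the control of the cocycle on the orthogonal complement of $\HH^A$ in the reduced sense are all left open --- as you acknowledge yourself in the final paragraph. What you are attempting to establish by hand is essentially Shalom's theorem that locally compact nilpotent groups have property $(H_T)$ (Corollary 5.1.3 in \cite{ShaActa}), which the paper simply cites. Granting that result, the paper's argument (Theorem \ref{HT}) is two lines and needs neither relative $F\E$ nor compact generation of $A$: decompose $\pi=\pi^G\oplus\pi^{\perp}$ with respect to the \emph{$G$}-invariant vectors (not the $A$-invariant ones); property $(H_T)$ gives $\overline{H}^1(G,\pi^{\perp})=0$ outright, so there is nothing to restrict, while $\overline{H}^1(G,\pi^G)=Z^1(G,\pi^G)=\hom(G,\HH^G)$ and every continuous homomorphism from $G$ to an abelian topological group vanishes on $\overline{[G,G]}=A$. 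I recommend abandoning the detour through Theorem \ref{main} and either citing Shalom's Corollary 5.1.3 or supplying a complete proof of it; as it stands your proposal contains both an incorrect step and an unproved core.
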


\subsection{Our Motivation}
The most classic example of a group pair with  relative property (T) is  $(SL_2(\Z)\ltimes \Z^2, \Z^2)$. There are various proofs of this fact \cite{Kazhdan}, \cite{Burger}, \cite{Valette1994},  \cite{Shalom1999}. However, all the proofs share two essential ingredients: they rely on the fact that $\Z^2$ is a normal, abelian subgroup and then apply the spectral theorem for the description of its unitary representations.

Nevertheless, the group $SL_2(\Z) \ltimes \Z^2$  is not very complicated. This lead people to ask if it is possible to show that $(SL_2(\Z)\ltimes \Z^2, \Z^2)$ has relative property (T) by a straightforward argument which does not appeal to the spectral theorem. The hope is that such an argument could generalize to other situations where the spectral theorem doesn't apply, e.g. if the subgroup is not abelian, the representation is not unitary, or if the vector space is a Banach space.

This paper is the outcome of our attempt to answer this question. As a by-product of our approach we obtain a proof of the following, which does not use the spectral theorem:

\begin{Cor}\label{SL2} Let $\Rr = \Z$ or $\R$, and let $\pi$ be a uniformly bounded representation of $\Rr^2\rtimes SL_2(\Rr)$ on a Hilbert space. Then the image of the restriction map $\overline{H}^1(\Rr^2\rtimes SL_2(\Rr),\pi)\rightarrow\overline{H}^1(\Rr^2,\pi)$ is zero.
\end{Cor}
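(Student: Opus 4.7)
The plan is to exploit the semidirect product structure directly, via the conjugation identity
$$\sigma_g\,\tau_v\,\sigma_g^{-1}=\tau_{gv}\qquad(v\in\Rr^2,\ g\in SL_2(\Rr)),$$
where I write $\tau_v=(v,1)$ and $\sigma_g=(0,g)$. Expanding this in the cocycle relation for any $b\in Z^1(G,\pi)$ yields
$$b(\tau_v)=\pi(\sigma_g)\,b(\tau_{g^{-1}v})-(\pi(\tau_v)-1)\,b(\sigma_g),$$
so, as a function of $v\in\Rr^2$, the term $(\pi(\tau_v)-1)b(\sigma_g)$ is a genuine coboundary $\partial(-b(\sigma_g))$ on $\Rr^2$. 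Thus $v\mapsto b(\tau_v)$ is cohomologous, on $\Rr^2$, to $v\mapsto\pi(\sigma_g)b(\tau_{g^{-1}v})$ for every $g$.

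Next I would contract one axis at a time. Taking $a_n=\mathrm{diag}(\lambda_n,\lambda_n^{-1})$ with $\lambda_n\to\infty$, the sequence $a_n^{-1}v$ tends to $0$ uniformly for $v$ in compacts of $\Rr\times\{0\}$. By continuity of $b$ at $e$ and uniform boundedness $\sup_n\|\pi(\sigma_{a_n})\|\leq C$, the remainder $\pi(\sigma_{a_n})b(\tau_{a_n^{-1}v})$ tends to $0$ uniformly on such compacts, so $\xi_n:=-b(\sigma_{a_n})$ witnesses that $b|_{\Rr\times\{0\}}$ is a reduced coboundary for $\pi|_{\Rr\times\{0\}}$. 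The symmetric choice $a_n'=\mathrm{diag}(\lambda_n^{-1},\lambda_n)$ gives, via $\xi_n':=-b(\sigma_{a_n'})$, the analogous statement on compacts of $\{0\}\times\Rr$.

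The hard step is to glue these two axis-wise approximations into a single sequence $\eta_n$ approximating $b|_{\Rr^2}$ on arbitrary compacts of $\Rr^2$. The cocycle identity on the abelian group $\Rr^2$ gives
$$b(\tau_{(x,y)})=b(\tau_{(x,0)})+\pi(\tau_{(x,0)})\,b(\tau_{(0,y)}),$$
and naively substituting the two approximations introduces a cross-term $\pi(\tau_{(x,0)})(\pi(\tau_{(0,y)})-1)\xi_n'$ that is not of the form $(\pi(\tau_{(x,y)})-1)\eta_n$ unless $\xi_n$ and $\xi_n'$ are asymptotically aligned. To align them I would use the Weyl element $w=\bigl(\begin{smallmatrix}0&-1\\1&0\end{smallmatrix}\bigr)$, which satisfies $a_n'=wa_nw^{-1}$; the cocycle identity then gives
$$\xi_n'=\pi(\sigma_w)\,\xi_n+(1-\pi(\sigma_{a_n'}))\,b(\sigma_w),$$
a bounded relation between the two sequences from which a single asymptotic vector can be extracted (after, if needed, passing to a weak-$*$ limit or convex average).

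The main obstacle is precisely this gluing, which reflects the fact that $SL_2(\Rr)$ preserves area and therefore cannot uniformly contract all of $\Rr^2$: the contraction argument is intrinsically directional, and one is forced to piece together the horizontal and vertical data. I expect that the Weyl-conjugation identity above, together with the uniform boundedness hypothesis, is exactly what makes the gluing work for both $\Rr=\R$ and $\Rr=\Z$; alternatively, one could route the argument through Theorem~\ref{main}, verifying the relative property $F\E$ for $(G,\Rr^2)$ (trivial, since $SL_2(\Rr)$ has no nontrivial finite orbit on $\widehat{\Rr^2}$, so any finite-dimensional orthogonal representation of $G$ annihilates $\Rr^2$) and the weaker relative $(H_T)$ (which is the substantive content and would again use the contraction identity above).
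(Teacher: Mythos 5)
There is a genuine gap here --- two, in fact. First, the contraction argument is simply unavailable for $\Rr=\Z$: the diagonal subgroup of $SL_2(\Z)$ is $\{\pm I\}$, so there is no sequence $a_n=\mathrm{diag}(\lambda_n,\lambda_n^{-1})$ in the group with $\lambda_n\to\infty$ (and $a_n^{-1}v$ would in any case not lie in $\Z^2$). Since the corollary is asserted for $\Rr=\Z$, the approach cannot cover the statement. Second, even for $\Rr=\R$, the gluing step you yourself flag as the main obstacle really is one: your Weyl-element relation controls $\xi_n'-\pi(\sigma_w)\xi_n$, which is bounded, but the cross-term you must kill involves $(\pi(\tau_{(0,y)})-1)(\xi_n-\xi_n')$, and $\xi_n-\pi(\sigma_w)\xi_n$ is unbounded in general; nothing in the proposal makes that term small, and "passing to a weak-$*$ limit or convex average" does not obviously produce the required single sequence. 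The fallback through Theorem \ref{main} also fails as stated: that theorem is about unitary representations, not uniformly bounded ones, and your justification of relative $F\E$ (``$SL_2(\Rr)$ has no nontrivial finite orbit on $\widehat{\Rr^2}$'') is false for $\Rr=\Z$, where every rational point of the dual torus has a finite $SL_2(\Z)$-orbit.

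The paper's proof takes a completely different and more robust route, avoiding contraction altogether. It shows that $\Rr^2$ is the product of just \emph{two} $G$-conjugacy classes: for $(a,b)\in\Rr^2$, conjugating $(0,1)$ by the upper-triangular unipotent with entry $a-1$ gives $(a-1,1)$, and conjugating $(1,0)$ by the lower-triangular unipotent with entry $b-1$ gives $(1,b-1)$, whence $(a,b)=(0,1)^{g_1}+(1,0)^{g_2}$. Since formula (\ref{cocy}) gives $b(a^g)=\pi(g)b(a)$ when $\pi$ is trivial on $A$, any cocycle is then uniformly bounded on $\Rr^2$ for the $A$-trivial summand $\pi^A$, while the orthogonal summand is disposed of by property $(H_T)$ of the abelian group $\Rr^2$ after unitarizing $\pi|_A$ (Theorem \ref{UB}). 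This argument is insensitive to whether $\Rr$ is $\Z$, $\R$, or any locally compact topological ring, which is exactly where your directional-contraction strategy breaks down.
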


The paper is organized as follows: Section 2 is a collection of properties about affine actions and the first cohomology. Section 3 is a short proof that amenable groups can not have relative property (T) with respect to non-compact subgroups. Section 4 is a discussion of Shalom's property $(H_T)$. In Section 5 we discuss the restriction map on the  reduced first  cohomology and prove our main theorem. Section 6 is devoted to demonstrating examples of group pairs which satisfy our main theorem using bounded generation. In Section 7, we study the connection between distorted subgroups and relative property $F\E$. Finally, the appendix is by Florian Martin and Alain Valette where they answer a question of Guichardet \cite{Guic}.

All of our groups are locally compact and second countable, representations are strongly continuous and Hilbert spaces are separable.

\medskip

{\bf Acknowledgements:} We thank B. Bekka, T. Gelander, Y. Glasner and Y. Shalom for some useful conversations and correspondence.
Also, we would like to thank Universit\'e d'Orl\'eans and in particular Indira Chatterji for their hospitality as this work was initiated during a common visit there.

\section{Affine actions and 1-cohomology}

We begin by discussing several facts about affine actions and 1-cohomology. We will make use of some of these, while the others are mentioned only for the sake of recording them, as they don't seem to appear elsewhere in the literature.

\subsection{Generalities}

 Let $G$ be a topological group and let $V$ be a topological $G$-module, i.e. a real or complex
 topological vector space endowed with a continuous linear representation $\pi:G\times V\rightarrow
V;\, (g,v) \mapsto \pi(g)v$. If $H$ is a closed subgroup we denote by $\pi\vert_H$ the
restriction of the representation $\pi$ to $H$, and by $V^H$ the set of
$H$-fixed points: $$V^H=\{v\in V\,\vert\, \pi(h)v=v,\, \forall h\in H\}.$$
We say that $V$ is a {\it Banach $G$-module} if $V$ is a Banach space
and $\pi$ is a representation of $G$ by isometries of $V$. A $G$-module
is {\it unitary} if $V$ is a Hilbert space and $\pi$ a unitary representation.\\ \\ We now
introduce the space of $1$-cocycles and $1$-coboundaries on $G$, and the $1$-cohomology
with coefficients in $\pi$:
\begin{enumerate}
\item[$\bullet$] $Z^1(G,\pi)=\{b:G\rightarrow V\,\mbox{continuous }\vert \, b(gh)=b(g)+\pi(g)b(h),\\
\hspace*{23mm}\forall g,h\in G\}$;
\item[$\bullet$] $B^1(G,\pi)=\{b\in Z^1(G,\pi)\vert \, \exists v\in V: \, b(g)=\pi(g)v-v,\forall g\in G\}$;
\item[$\bullet$] $H^1(G,\pi)=Z^1(G,\pi)/B^1(G,\pi)$.
\end{enumerate}

Note the following formula, which follows by iterating the cocycle relation; for every $g,h\in G$:
\begin{equation}\label{cocy} b(ghg^{-1})=(1-\pi(ghg^{-1}))b(g) + \pi(g)b(h).
\end{equation}
This equation will be used repeatedly throughout this paper.

There is a well known dictionary between $1$-cocycles of $Z^1(G,\pi)$ and (continuous)
affine actions of $G$ on $V$. If $b:G\rightarrow V$ is a continuous map then:\\
\begin{center}
\begin{tabular}{ccp{5cm}}$b\in Z^1(G,\pi)$&$\Leftrightarrow$&$ \alpha(g)v=\pi(g)v+b(g)$ defines an affine
action with linear part given by $\pi$\end{tabular}\end{center}
Here, $B^1(G,\pi)$ corresponds exactly to actions with a global fixed point, i.e. those actions conjugate to the module action by a translation of $V$. We will make repeated use of the fact that, if $\pi$ is a unitary representation on a Hilbert space, and $b\in Z^1(G,\pi)$, we have $b\in B^1(G,\pi)$ if and only if $b$ is bounded on $G$ (see \cite{BHV}, Proposition 2.2.9).

\medskip


If $A$ is a closed normal subgroup of $G$, and $\pi$ is a $G$-representation, we denote by $\pi^A$ the representation of $G/A$ on $V^A$.




The following result (``Lyndon-Hochschild-Serre sequence'') is well-known (see
e.g. Corollary 6.4 of Chapter VII in \cite{BrownCoho}) and usually
proved using the Hochschild-Serre spectral sequence in group
cohomology.\footnote{For a proof without spectral sequences, see 8.1
in Chapter 1 of \cite{Gui}.}

\begin{Prop}\label{exactNormal}
\begin{enumerate}
    \item  [1)]  There is an exact sequence
    $$0\rightarrow
    H^{1}(G/A,\pi^{A})\stackrel{i_{*}}{\rightarrow}H^{1}(G,\pi)
    \stackrel{Rest_{G}^{A}}{\rightarrow}H^1(A,\pi\vert_A)^{G/A}$$
    where $i:V^{A}\rightarrow V$ denotes the inclusion;

    \item  [2)] If $V^A=0$, then the restriction map $$Rest_G^A:H^1(G,\pi)\rightarrow
H^1(A,\pi\vert_A)^{G/A}$$ is an isomorphism.
\end{enumerate}
\end{Prop}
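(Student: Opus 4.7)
The plan is to realize the sequence by explicit cochain maps and check everything at the cocycle level, using only the cocycle identity (\ref{cocy}) and the hypothesis that $A$ is closed and normal.

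First I would define $i_*$. Given a cocycle $\beta\in Z^1(G/A,\pi^A)$, let $p:G\to G/A$ be the quotient and set $i_*\beta = i\circ\beta\circ p$, where $i:V^A\hookrightarrow V$ is the inclusion. Using that $\pi(a)$ fixes $V^A$, one checks this is a continuous $1$-cocycle on $G$ with coefficients in $\pi$, and it sends coboundaries to coboundaries. Injectivity of $i_*$ on cohomology follows because if $i_*\beta(g)=\pi(g)v-v$ for some $v\in V$, then putting $g=a\in A$ gives $0=\pi(a)v-v$, so $v\in V^A$; hence $\beta$ itself is a coboundary in $V^A$.

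Next I would address the restriction map. The composition $Rest\circ i_*$ is zero on the nose, since $(i\circ\beta\circ p)\vert_A=0$. For $G/A$-invariance of the image of $Rest$, take $b\in Z^1(G,\pi)$ and compute, using the cocycle identity, that for $g\in G$ and $a\in A$,
$$\pi(g)b(g^{-1}ag)=b(a)+(\pi(a)-1)(-b(g)),$$
so that the translated cocycle $a\mapsto\pi(g)b(g^{-1}ag)$ differs from $b\vert_A$ by the $A$-coboundary $(\pi(\cdot)-1)(-b(g))$. Hence $[b\vert_A]$ is fixed by $G/A$. For exactness at $H^1(G,\pi)$: if $[b\vert_A]=0$, subtract a coboundary to assume $b\vert_A=0$. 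Then $b(ga)=b(g)+\pi(g)b(a)=b(g)$ and, since $A\triangleleft G$, also $b(ag)=b(g)$ after using $ag=g(g^{-1}ag)$ and $g^{-1}ag\in A$; comparing the two evaluations forces $\pi(a)b(g)=b(g)$, i.e.\ $b(g)\in V^A$ for every $g$. Thus $b=i\circ\bar b\circ p$ for a unique cocycle $\bar b\in Z^1(G/A,\pi^A)$, proving exactness.

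For part 2, $V^A=0$ makes $H^1(G/A,\pi^A)=0$, so the above sequence yields injectivity of $Rest$. For surjectivity, take $[b]\in H^1(A,\pi\vert_A)^{G/A}$. For each $g\in G$, the cocycle $a\mapsto\pi(g)b(g^{-1}ag)-b(a)$ is a coboundary on $A$, so there exists $v_g\in V$ with
$$\pi(g)b(g^{-1}ag)-b(a)=(\pi(a)-1)v_g\qquad(a\in A).$$
Since $V^A=0$, the vector $v_g$ is unique. A direct computation, using $(gh\cdot b)=g\cdot(h\cdot b)$ and uniqueness, gives $v_{gh}=v_g+\pi(g)v_h$, and applied to $g=a\in A$ it yields $v_a=-b(a)$ (or $+b(a)$ with the opposite sign convention). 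Setting $c(g):=-v_g$ produces a cocycle on $G$ extending $b$, whose class restricts to $[b]$.

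The main obstacle is the \emph{continuity} of the extension $c$: one must verify that $g\mapsto v_g$ is continuous, which is not formal. I would handle it by observing that $g\mapsto g\cdot b-b$ is a continuous map into $B^1(A,\pi\vert_A)$ and that, under the hypothesis $V^A=0$, the linear bijection $V\to B^1(A,\pi\vert_A)$, $v\mapsto(\pi(\cdot)-1)v$, has continuous inverse on the relevant subspace (via an open mapping or boundedness-of-coboundary argument tailored to the module at hand), so $v_g$ depends continuously on $g$. The remaining verifications, though tedious, are mechanical applications of (\ref{cocy}).
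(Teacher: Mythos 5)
Your treatment of part 1 is correct and complete (up to a harmless sign in the identity $\pi(g)b(g^{-1}ag)=b(a)+(\pi(a)-1)b(g)$, which does not affect the conclusion that the two restrictions differ by an $A$-coboundary), and the algebraic core of part 2 -- uniqueness of $v_g$ from $V^A=0$, the identity $v_{gh}=v_g+\pi(g)v_h$ via uniqueness, and $v_a=\pm b(a)$ -- is also right. Note that the paper does not prove this proposition at all; it cites Brown and Guichardet, so any direct argument is "different from the paper" by default. The only place the paper later invokes the result, it uses injectivity of $Rest_G^A$ when $V^A=0$, which your exact-sequence argument already delivers.

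The genuine gap is exactly where you locate it, and your proposed repair does not work. You want to invert $\partial\colon V\to B^1(A,\pi|_A)$, $v\mapsto(\pi(\cdot)-1)v$, continuously. But $B^1(A,\pi|_A)$ is in general not closed in the Fr\'echet space $Z^1(A,\pi|_A)$ (this non-closedness is the whole point of reduced cohomology in this paper), so no open mapping theorem applies, and in fact $\partial^{-1}$ is genuinely discontinuous: take $A=\Z$ acting by the regular representation on $V=\ell^2(\Z)$ (so $V^A=0$) and $w_n=(n+1)^{-1/2}\chi_{\{0,\dots,n\}}$; then $\|w_n\|=1$ for all $n$ while $\partial w_n\to 0$ uniformly on compact (i.e. finite) subsets of $\Z$. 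So continuity of $g\mapsto v_g$ cannot be extracted from continuity of $g\mapsto T_gb-b$ by any "boundedness of the inverse" argument. The standard repair is automatic continuity: $\partial$ is a continuous injective linear map between Polish spaces, so by Lusin--Souslin its image is Borel and $\partial^{-1}$ is Borel measurable; hence $g\mapsto v_g$ is a Borel measurable cocycle, and a Borel cocycle of a locally compact second countable group with values in a separable Banach (or Polish) module is automatically continuous (Banach--Weil--Moore type results). Alternatively one simply quotes the continuous-cohomology Hochschild--Serre sequence as the authors do. Without one of these inputs, your proof of surjectivity in part 2 is incomplete.
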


\subsection{Reduced 1-cohomology}

In this section, we stick to Banach $G$-modules.

\begin{Def}
Let $G$ be a locally compact second countable group and $V$ be a
Banach $G$-module. The space of $1$-cocycles is a Fr\'echet space when endowed with the
topology of uniform convergence on compact subsets of $G$. The $1$-reduced cohomology space with
coefficients in $V$ is $$\overline{H^1}(G,\pi)=Z^1(G,\pi)/\overline{B^1(G,\pi)}.$$ In terms of
affine actions the elements of $\overline{B^1(G,\pi)}$, called almost coboundaries,
correspond to affine action $\alpha$ which are almost conjugate by a translation to the
unitary action, in the sense that for every $\varepsilon>0$ and for every compact subset
$K$ of $G$ there exists a translation $t_v$ of $V$ such that, for
every $g\in K$, the norm of the translation
$\alpha(g)-t^{-1}_v\circ\pi(g)\circ t_v$ is less than $\varepsilon$.
\end{Def}

\medskip

If $(V,\pi)$ is a Banach $G$-module, and $\alpha$ is an affine action with linear part
$\pi$, we say that $\alpha$ almost has fixed points if for every $\varepsilon>0$ and for
every compact subset $K$ of $G$ there exists a $v\in V$ such that
$\sup_K\|\alpha(g)v-v\|<\varepsilon$. This is equivalent to be almost conjugate  by a
translation to the unitary action. So $\overline{H^1}(G,\pi)=0$ if and only if every affine
action with linear part $\pi$ almost has fixed points. The following lemma is then clear:

\begin{Lem}\label{inductivelim} Let $G$ be the increasing union of a family $(G_n)_{n\geq 1}$ of open subgroups, and let $(\pi,V)$ be a Banach $G$-module. If $\~H^1(G_n,\pi|_{G_n})=0$ for every $n\geq 1$, then $\~H^1(G,\pi)=0$.
\end{Lem}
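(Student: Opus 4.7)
The plan is to unwind the definitions and reduce the statement to the fact that any compact subset of $G$ is absorbed by some $G_n$.

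First I would fix a cocycle $b \in Z^1(G,\pi)$ and aim to show that $b$ lies in $\overline{B^1(G,\pi)}$, that is, that for every compact $K \subseteq G$ and every $\varepsilon > 0$ there is a vector $v \in V$ with
\[
\sup_{g \in K} \|b(g) - (\pi(g)v - v)\| < \varepsilon.
\]
The key topological observation is that the open subgroups $G_n$ form an increasing open cover of $G$, hence of $K$. Since $K$ is compact, finitely many $G_n$ suffice, and since the family is increasing, there is an $N$ with $K \subseteq G_N$.

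Next I would restrict $b$ to $G_N$: then $b|_{G_N} \in Z^1(G_N, \pi|_{G_N})$. Since $G_N$ is open, its subspace topology coincides with the topology induced from $G$, so $K$ is still compact viewed inside $G_N$. By hypothesis $\overline{H}^1(G_N,\pi|_{G_N}) = 0$, which means $b|_{G_N} \in \overline{B^1(G_N,\pi|_{G_N})}$. Unpacking the description of almost coboundaries right before the lemma (``almost having fixed points''), this yields a vector $v \in V$ satisfying the displayed inequality for all $g \in K$.

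Since $K$ and $\varepsilon$ were arbitrary, $b$ lies in the closure of $B^1(G,\pi)$ in the topology of uniform convergence on compact subsets of $G$, so $\overline{H}^1(G,\pi) = 0$. There is no real obstacle here; the only point that needs to be noticed is the standard fact that an increasing open cover of a compact set is eventually absorbing, which is what makes the passage from the family $(G_n)$ to $G$ automatic.
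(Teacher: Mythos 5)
Your argument is correct and is exactly the reasoning the paper has in mind: the authors state the lemma as ``clear'' immediately after the characterization of $\overline{H^1}(G,\pi)=0$ via almost-fixed points, and the only content is the observation you make, that a compact subset of $G$ is contained in some single $G_N$ because the $G_n$ form an increasing open cover. Nothing further is needed.
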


\begin{Rem}\label{finitedim} Observe that, if $V$ is finite-dimensional, then so is $B^1(G,\pi)$ (as the image of $V$ under the linear map $V\rightarrow Z^1(G,\pi): v\mapsto (g\mapsto \pi(g)v-v)$), so that $B^1(G,\pi)$ is closed in $Z^1(G,\pi)$, i.e. $\overline{H^1}(G,\pi)=H^1(G,\pi)$ in this case.
\end{Rem}





\section{The Haagerup Property vs. Relative Property (T)}
Recall that a locally compact group $G$ has the {\it Haagerup property} if there exists a unitary representation $\sigma$ of $G$ and $b\in Z^1(G,\sigma)$ such that $b$ is proper (i.e. $\lim_{g\rightarrow\infty} \|b(g)\|=+\infty$). It was proved in \cite{BCV} that $\sigma$-compact amenable groups have the Haagerup property.

\begin{Prop}\label{Haag} Let $G$ be a group with the Haagerup property, and $A$ a closed, non-compact subgroup of $G$. There exists a unitary representation $\sigma$ of $G$ such that $Rest_G^A: H^1(G,\sigma)\rightarrow H^1(A,\sigma|_A)$ is non-zero.
\end{Prop}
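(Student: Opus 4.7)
The plan is to use the proper 1-cocycle supplied by the Haagerup property directly, and to verify that it produces a non-trivial restriction class. Concretely, fix a unitary representation $\sigma$ of $G$ together with a proper cocycle $b \in Z^1(G,\sigma)$, so that $\|b(g)\| \to \infty$ as $g \to \infty$ in $G$. I propose to show that the class $[b] \in H^1(G,\sigma)$ maps to a nonzero class under $Rest_G^A$, which will suffice.

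The first step is to observe that $b|_A$ is unbounded on $A$. Since $A$ is closed in $G$, any sequence in $A$ that escapes every compact subset of $A$ also escapes every compact subset of $G$; hence $\|b(a_n)\| \to \infty$ along such a sequence. Equivalently, if $b|_A$ were bounded, then $A$ would lie inside the $b$-preimage of some bounded ball, which is relatively compact by properness of $b$, forcing the closed set $A$ to be compact and contradicting the hypothesis.

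The second step is to invoke the characterization recalled in Section 2 (Proposition 2.2.9 of \cite{BHV}): for a unitary representation on a Hilbert space, a 1-cocycle is a coboundary if and only if it is bounded. Applied to $b|_A \in Z^1(A,\sigma|_A)$, this yields $b|_A \notin B^1(A,\sigma|_A)$, so $Rest_G^A([b]) = [b|_A] \neq 0$. There is no real obstacle here; the proposition is essentially immediate once one pairs the Haagerup cocycle with the bounded-versus-coboundary dichotomy, the only minor point being the propagation of properness from $G$ to the closed non-compact subgroup $A$.
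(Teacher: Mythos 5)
Your proof is correct and follows exactly the paper's argument: take the proper cocycle $b$ guaranteed by the Haagerup property, observe that closedness and non-compactness of $A$ force $b|_A$ to be unbounded, and conclude via the bounded-iff-coboundary criterion that $[b|_A]\neq 0$. The extra detail you supply on why properness propagates to the closed subgroup is a welcome elaboration of the paper's one-line justification, but the route is the same.
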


\bpr Take a representation $\sigma$ of $G$ admitting a proper $b\in Z^1(G,\sigma)$. Since $A$ is closed and not compact, $b|_A$ is unbounded, so that the class of $b|_A$ cannot be zero in $H^1(A,\sigma|_A)$.\epr

When $G$ is amenable, the representation $\sigma$ in Proposition \ref{Haag} can be made explicit: the proof in \cite{BCV} reveals that the direct sum of countably many copies of the regular representation of $G$, does the job.

\section{Property $(H_T)$}

As an appetizer to our main results, we give the following theorem, which exemplifies our approach.

Recall from \cite{ShaActa} that a locally compact group $G$ has property $(H_T)$ if every unitary representation $\pi$ of $G$ with $\overline{H}^1(G,\pi)\neq 0$, has non-zero invariant vectors. It is known that locally compact nilpotent groups have property $(H_T)$ (Corollary 5.1.3 in \cite{ShaActa}), as well as lamplighter groups $F\wr\Z$, where $F$ is any finite group (Theorem 5.2.1 in \cite{ShaActa}). And, since property (T) groups have all $\overline{H}^1(G,\pi)= 0$, these trivially also have property $(H_T)$. The following result extends Theorem \ref{nilpotent} from the Introduction.


\begin{Thm}\label{HT} Let $G$ have property $(H_T)$ and $A$ be a closed subgroup contained in $\overline{[G,G]}$. Then for every unitary representation $\pi$ of $G$, the restriction map $Rest_G^A:\overline{H}^1(G,\pi)\rightarrow\overline{H}^1(A,\pi|_A)$ is zero.
\end{Thm}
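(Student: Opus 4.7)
The plan is to split the unitary representation $\pi$ according to the subspace of $G$-invariant vectors and handle each piece separately, using property $(H_T)$ on the invariant-free part and the commutator condition on the trivial part.

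More precisely, I would begin by decomposing the Hilbert space as $V = V^G \oplus (V^G)^\perp$, an orthogonal decomposition by $G$-invariant subspaces. Write $\pi = \pi_0 \oplus \pi_1$, where $\pi_0$ is the trivial representation on $V^G$ and $\pi_1$ is the restriction of $\pi$ to $(V^G)^\perp$, a unitary representation with no non-zero $G$-invariant vectors. Correspondingly, any $b \in Z^1(G,\pi)$ splits as $b = b_0 + b_1$ with $b_0 \in Z^1(G,\pi_0)$ and $b_1 \in Z^1(G,\pi_1)$.

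For $b_1$: since $\pi_1$ has no non-zero invariants, property $(H_T)$ gives $\overline{H}^1(G,\pi_1)=0$, so $b_1 \in \overline{B^1(G,\pi_1)}$. Explicitly, for every compact $K\subseteq G$ and every $\varepsilon>0$ there exists $v\in(V^G)^\perp$ with $\sup_{g\in K}\|b_1(g)-\pi_1(g)v+v\|<\varepsilon$. Taking $K$ to range over compact subsets of $A$ (which are, a fortiori, compact in $G$), we conclude $b_1|_A \in \overline{B^1(A,\pi_1|_A)}$.

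For $b_0$: since $\pi_0$ is trivial, the cocycle identity collapses to $b_0(gh)=b_0(g)+b_0(h)$, so $b_0 \colon G \to V^G$ is a continuous homomorphism into an abelian topological group. Therefore $b_0$ vanishes on $[G,G]$ and, by continuity, on $\overline{[G,G]}$. In particular $b_0|_A=0$ since $A\subseteq\overline{[G,G]}$.

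Combining the two pieces, $b|_A = b_0|_A + b_1|_A = b_1|_A \in \overline{B^1(A,\pi_1|_A)} \subseteq \overline{B^1(A,\pi|_A)}$, so the class of $b|_A$ is zero in $\overline{H}^1(A,\pi|_A)$. I do not anticipate a serious obstacle; the only subtlety worth checking carefully is that restriction of almost coboundaries to a closed subgroup remains an almost coboundary, which is immediate from the definition via the topology of uniform convergence on compacta.
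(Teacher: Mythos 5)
Your proposal is correct and follows essentially the same route as the paper: decompose $\pi$ along $V^G\oplus(V^G)^\perp$, kill the invariant-free summand with property $(H_T)$, and observe that on the trivial summand the cocycle is a continuous homomorphism vanishing on $\overline{[G,G]}\supseteq A$. The only difference is cosmetic: you verify explicitly that restricting an almost coboundary to a closed subgroup yields an almost coboundary, a point the paper leaves implicit.
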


\begin{proof}

Let ${\cal H}$ be the Hilbert space of $\pi$. Let $\pi^{\perp}$ be the representation on the orthogonal complement of ${\cal H}^G$. Then $\pi=\pi^G\oplus\pi^{\perp}$ and $\overline{H}^1(G,\pi)=\overline{H}^1(G,\pi^G)\oplus\overline{H}^1(G,\pi^{\perp})$. Since $\pi^{\perp}$ has no non-zero invariant vector, by property $(H_T)$ the second summand is zero. Now
$$\overline{H}^1(G,\pi^G)=H^1(G,\pi^G)=Z^1(G,\pi^G)=\hom(G,{\cal H}^G),$$
the continuous homomorphisms from $G$ to the additive group of ${\cal H}^{G}$. Since $A$ is contained in $\overline{[G,G]}$, every such homomorphism is zero on $A$, which concludes the proof.
\end{proof}

\begin{Ex} Take for $G$ the 3-dimensional Heisenberg group, and for $A$ its center: then both Proposition \ref{Haag} and Theorem \ref{HT} apply to the pair $(G,A)$.
\end{Ex}

\section{Restriction in reduced 1-cohomology}

\subsection{Relative Properties $(H_T)$}

Given Theorem \ref{HT}, we explore some relative variants of property $(H_T)$, and their connection to the vanishing of restriction maps on reduced cohomology.

\begin{Def}
Let $A \leq G$ be a closed  subgroup. We say that the pair $(G,A)$ has relative $(H_T)$ if every unitary $G$-representation $\pi$ with $\~H^1(G, \pi) \neq 0$ has nontrivial $A$-invariant vectors.
\end{Def}

\begin{Def}
Let $A\leq G$ be a closed subgroup. We say that $(G,A)$ has weak-relative $(H_T)$ if any unitary $G$-representation $\pi$ with a non-vanishing restriction map $\~H^1(G, \pi) \to \~H^1(A, \pi|_A)$ must have nontrivial $A$-invariant vectors.
\end{Def}

\noindent
The following lemma is straightforward.

\begin{Lem} Let $A$ be a closed subgroup in $G$. Consider the following 4 properties:
\begin{enumerate}
\item [1)] The group $G$ has property $(H_T)$;
\item [2)] The group $A$ has property $(H_T)$;
\item [3)] The pair $(G,A)$ has relative $(H_T)$;
\item [4)] The pair $(G,A)$ has weak relative $(H_T)$.
\end{enumerate}
Then $(1)\Rightarrow (3)\Rightarrow (4)$ and $(2)\Rightarrow (4)$.
\hfill$\square$
\end{Lem}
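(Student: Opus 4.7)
The plan is to verify each of the three implications directly from the definitions, since none of them requires any nontrivial input beyond unwinding what it means to have (relative, weak relative) property $(H_T)$ and using the obvious fact that every $G$-invariant vector is automatically $A$-invariant.

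For $(1)\Rightarrow(3)$, I start with a unitary representation $\pi$ of $G$ such that $\overline{H}^1(G,\pi)\neq 0$. Property $(H_T)$ for $G$ gives that $\pi$ has a nonzero $G$-invariant vector; such a vector is in particular $A$-invariant, which is exactly what relative $(H_T)$ for the pair $(G,A)$ demands.

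For $(3)\Rightarrow(4)$, let $\pi$ be a unitary representation of $G$ whose restriction map $\overline{H}^1(G,\pi)\to\overline{H}^1(A,\pi|_A)$ is nonzero; then a fortiori $\overline{H}^1(G,\pi)\neq 0$, so relative $(H_T)$ of the pair $(G,A)$ directly supplies the required nontrivial $A$-invariant vector.

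For $(2)\Rightarrow(4)$, again let $\pi$ be a unitary $G$-representation with non-vanishing restriction map $\overline{H}^1(G,\pi)\to\overline{H}^1(A,\pi|_A)$. In particular $\overline{H}^1(A,\pi|_A)\neq 0$, and applying property $(H_T)$ for $A$ to the restricted unitary representation $\pi|_A$ produces a nonzero $A$-invariant vector, as required. There is no genuine obstacle here; the only mild subtlety is the direction of the non-vanishing hypothesis in (4), which is why (2) feeds into the target side $\overline{H}^1(A,\pi|_A)$ while (1) feeds into the source side $\overline{H}^1(G,\pi)$.
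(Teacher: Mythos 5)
Your proof is correct, and it is exactly the routine unwinding of definitions that the paper has in mind when it labels the lemma ``straightforward'' and omits the argument: $(1)\Rightarrow(3)$ via $G$-invariant vectors being $A$-invariant, $(3)\Rightarrow(4)$ via a nonzero restriction map forcing $\overline{H}^1(G,\pi)\neq 0$, and $(2)\Rightarrow(4)$ via a nonzero image forcing $\overline{H}^1(A,\pi|_A)\neq 0$. Nothing further is needed.
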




In general, it is FALSE that $(2)\Rightarrow (3)$. We give the following example:
\begin{Ex} Let $\Gamma$ be an infinite discrete group with property (T), and let $F$ be a non-trivial finite abelian group. Let $G=F\wr\Gamma$ be the corresponding wreath product; set $A=\bigoplus_{\Gamma} F$, an abelian normal subgroup in $G$. Then $A$ has property $(H_T)$, as it is abelian, while the pair $(G,A)$ does not have relative $(H_T)$, as a consequence of Theorem \ref{wreath} in the Appendix.
\end{Ex}

\begin{Lem}
Suppose that $(G,A)$ has relative property (T) and $A$ is normal in $G$. Then $(G,A)$ has relative property $(H_T)$.
\end{Lem}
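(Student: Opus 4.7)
The plan is to argue by contrapositive: I will take a unitary $G$-representation $(\pi,V)$ with no nonzero $A$-invariant vectors (i.e. $V^A=0$) and show that $\overline{H}^1(G,\pi)=0$. Once this is established, any $\pi$ with $\overline{H}^1(G,\pi)\neq 0$ must have $V^A\neq 0$, which is exactly relative $(H_T)$ for the pair.

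The key observation is that normality of $A$ lets me use the Lyndon--Hochschild--Serre sequence from Proposition \ref{exactNormal}. Since $V^A=0$, part (2) of that proposition tells me that the restriction map
$$Rest_G^A: H^1(G,\pi)\longrightarrow H^1(A,\pi|_A)^{G/A}$$
is an \emph{isomorphism}, and in particular injective into $H^1(A,\pi|_A)$. On the other hand, by the standing assumption of relative property (T) for the pair $(G,A)$, recalled in the introduction, the restriction map $Rest_G^A: H^1(G,\pi)\to H^1(A,\pi|_A)$ is \emph{zero} for every unitary $\pi$. Putting the two facts together forces $H^1(G,\pi)=0$.

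To pass from ordinary to reduced cohomology, note that $H^1(G,\pi)=0$ means $Z^1(G,\pi)=B^1(G,\pi)$, hence a fortiori $Z^1(G,\pi)=\overline{B^1(G,\pi)}$, so $\overline{H}^1(G,\pi)=0$, as desired.

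There is no real obstacle here; the whole argument is essentially a packaging of Proposition \ref{exactNormal}(2) with the cohomological characterization of relative property (T). The only place where normality of $A$ is used is in invoking Proposition \ref{exactNormal}(2); without normality there is no obvious $G$-action on $H^1(A,\pi|_A)$ and no LHS sequence to exploit.
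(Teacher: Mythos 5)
Your argument is correct and is essentially the paper's own proof, just run in the opposite logical direction: the paper assumes the failure of relative $(H_T)$ and uses Proposition \ref{exactNormal} to produce a representation witnessing the failure of relative property (T), whereas you assume relative property (T) and deduce $H^1(G,\pi)=0$ whenever $V^A=0$. Both versions rest on exactly the same two ingredients --- the injectivity of $Rest_G^A$ on $H^1$ when $V^A=0$ and the cohomological characterization of relative property (T) --- so there is nothing further to add.
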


\begin{proof} By contrapositive, if $(G,A)$ fails to have relative $(H_T)$ then there is a unitary $G$-representation  $\pi$ with $\~H^1(G, \pi) \neq 0$ but for which the only $A$-invariant vector is 0. Observe that  $\~H^1(G, \pi) \neq 0$ implies that $H^1(G, \pi) \neq 0$.

By Proposition \ref{exactNormal} the restriction map $H^1(G, \pi) \to H^1(A, \pi|_A)$ is injective and has non-trivial image. This of course means that $(G,A)$ does not have relative property (T).
\end{proof}

The next proposition can be seen as the relative version of Theorem 4.2.1 (1) of Shalom's \cite{ShaActa}.

\begin{Prop}\label{baby} Let $A\norm G$ be a closed normal subgroup of $G$. The following are equivalent:
\begin{enumerate}\label{wrelHT}
\item [a)] The pair $(G,A)$ has weak relative $(H_T)$;
\item [b)] Every unitary irreducible representation $\pi$ of $G$ with a non-vanishing restriction map $\~H^1(G, \pi) \to \~H^1(A, \pi|_A)$, factors through $G/A$.
\end{enumerate}
\end{Prop}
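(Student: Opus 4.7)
The implication $(a)\Rightarrow(b)$ is straightforward. Let $\pi$ be an irreducible unitary $G$-representation whose restriction map on reduced 1-cohomology is non-vanishing. By (a), $\HH^A\neq 0$. Since $A$ is normal in $G$, the subspace $\HH^A$ is $G$-invariant and closed, so irreducibility of $\pi$ forces $\HH^A=\HH$; that is, $\pi(A)=\{\Id\}$, and $\pi$ factors through $G/A$.

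For $(b)\Rightarrow(a)$, I would argue the contrapositive: given a unitary $G$-representation $\pi$ with $\HH^A=0$, show that the restriction map $\overline{H}^1(G,\pi)\rightarrow\overline{H}^1(A,\pi|_A)$ vanishes. The plan is to disintegrate $\pi=\int^{\oplus}\pi_x\,d\mu(x)$ as a direct integral of irreducible representations (via the central decomposition of the von Neumann algebra generated by $\pi(G)$, using our standing second countability hypothesis). Under this decomposition $\HH^A=\int^{\oplus}\HH_x^A\,d\mu(x)$, so $\HH^A=0$ gives $\HH_x^A=0$ for $\mu$-a.e.\ $x$. Since each $\pi_x$ is irreducible and $\HH_x^A$ is $G$-invariant by normality of $A$, this forces $\pi_x$ \emph{not} to factor through $G/A$ for $\mu$-a.e.\ $x$. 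Hypothesis (b), applied contrapositively, then says that the restriction map on $\overline{H}^1(G,\pi_x)$ vanishes for $\mu$-a.e.\ $x$.

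The main obstacle is propagating this a.e.\ vanishing back to $\pi$. Reduced 1-cohomology is not additive under direct integrals, because $B^1$ is not closed in general, so the argument cannot just assemble coboundaries componentwise. The strategy, mirroring Shalom's proof of Theorem 4.2.1(1) in \cite{ShaActa}, is: fix a compact $K\subset A$ and $\varepsilon>0$; given $b\in Z^1(G,\pi)$, disintegrate $b$ into a measurable field $b_x\in Z^1(G,\pi_x)$; on a conull set choose vectors $v_x\in\HH_x$ with $\sup_{a\in K}\|\pi_x(a)v_x-v_x+b_x(a)\|<\varepsilon$, selecting them measurably by a standard measurable-selection lemma; then assemble $v=\int^{\oplus}v_x\,d\mu(x)\in\HH$ and verify $\sup_{a\in K}\|\pi(a)v-v+b(a)\|<\varepsilon$. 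The technical crux — and where I expect the real work — is controlling $\|v_x\|$ uniformly in $x$ so that $v$ actually lies in $\HH$; the hypothesis $\HH^A=0$ is essential here, since it prevents mass of the almost-invariant vectors from escaping to $A$-invariant subspaces and forces a uniform bound up to truncation.
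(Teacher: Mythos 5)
Your proof of $(a)\Rightarrow(b)$ is exactly the paper's argument and is fine. The problem is in $(b)\Rightarrow(a)$. By arguing the contrapositive you have made the step you call the ``technical crux'' --- passing from $b_x|_A\in\overline{B^1(A,\pi_x|_A)}$ for a.e.\ $x$ to $b|_A\in\overline{B^1(A,\pi|_A)}$ --- into the entire content of the proof, and you do not carry it out. Moreover, the mechanism you propose for the key estimate is not correct: the hypothesis $\HH^A=0$ gives no control on the norms of the almost-fixed vectors $v_x$. For fixed compact $K\subset A$ and $\varepsilon>0$, the quantity $\inf\{\|v\|:\sup_{a\in K}\|\pi_x(a)v-v+b_x(a)\|\le\varepsilon\}$ can be unbounded and non-integrable in $x$ even when every $\HH_x^A=0$ (this is precisely the phenomenon that makes $B^1$ fail to be closed fiberwise), so the field $\int^\oplus v_x\,d\mu(x)$ you want to assemble need not be square-integrable. ``Uniform bound up to truncation'' is an assertion, not an argument. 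If you insist on the contrapositive structure, the honest move is to invoke the direct-integral result the paper itself uses --- Proposition 2.6 in Chapter III of Guichardet \cite{Gui}, which is exactly the propagation statement you need --- rather than re-derive it by a selection argument whose crucial estimate is missing. You would also need to justify that $\HH^A=0$ implies $\HH_x^A=0$ a.e.\ (decomposability of the projection onto the $A$-fixed vectors); this is standard but should be stated.

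Note that the paper sidesteps your crux entirely by proving the implication directly rather than contrapositively: starting from $b\in Z^1(G,\pi)$ with $[b|_A]\neq 0$ in $\overline{H}^1(A,\pi|_A)$, Guichardet's proposition (applied in the contrapositive direction) produces a positive-measure set $E$ on which $[b_x|_A]\neq 0$; hypothesis (b) then forces $\rho_x$ to be trivial on $A$ for a.e.\ $x\in E$; and the closed subspace of vector fields supported on $E$ is a nonzero space of $\pi(A)$-fixed vectors. No approximate fixed points ever need to be glued together, because the conclusion of weak relative $(H_T)$ asks only for the existence of $A$-invariant vectors, which falls out immediately once the components over $E$ are trivial on $A$. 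I recommend restructuring your argument along these lines, or at minimum replacing the unproved selection step with the citation.
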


\begin{proof}
$(a)\Rightarrow (b)$ If $\pi$ is irreducible with a non-vanishing restriction map $\~H^1(G, \pi) \to \~H^1(A, \pi|_A)$, then since $(G,A)$ has weak relative $(H_T)$, the space ${\cal H}^A$ of $\pi(A)$-fixed vectors is non-zero. As $A\norm G$, that subspace is $\pi(G)$-invariant, so by irreducibility ${\cal H}^A={\cal H}$. In other words $\pi$ is trivial on $A$.

$(b)\Rightarrow (a)$ Let $\pi$ be a unitary representation with a non-vanishing restriction map $\~H^1(G, \pi) \to \~H^1(A, \pi|_A)$. As G is second countable, by decomposition theory, we find a measure space $(X,{\cal B},\mu)$ and a measurable field $(\rho_x)_{x\in X}$ of irreducible representations of $G$, such that $\pi =\int^\oplus_X \rho_x\,d\mu(x)$. Let $b\in Z^1(G,\pi)$ be such that the class of $b\vert_A$ is non zero in $\~H^1(A,\pi|_A)$. Write $b=\int^\oplus_X b_x\,d\mu(x)$, where $b_x\in Z^1(G,\rho_x)$ for every $x\in X$. By Proposition 2.6 in Chapter III of \cite{Gui}, there exists $E\in{\cal B}$, with $\mu(E)>0$, such that the class of $b_x\vert_A$ is non-zero in $\~H^1(A,\rho_x|_A)$ for almost every $x\in E$. By assumption, every such representation $\rho_x$ is trivial on $A$. Consider then the subspace ${\cal K}$ of ${\cal H}_\pi$ consisting of measurable vector fields $\xi=\int^\oplus_X \xi_x\,d\mu(x)$ such that $\xi_x=0$ for almost every $x\in X\backslash E$. Then ${\cal K}$ is a non-zero closed subspace of ${\cal H}_\pi$, whose elements are $\pi(A)$-fixed.
\end{proof}

\subsection{The Main Theorem}

Let $\E^n$ denote the $n$-dimensional Euclidean space, i.e. $\R^n$ with the $\ell^2$ norm. We consider the isometry group $Isom(\E^n)$ which is isomorphic to the motion group $O(n)\ltimes \E^n$.

\begin{Def}\label{relFE}
A group pair $(G,A)$ has relative property $F\E$ if every isometric $G$ action on a finite dimensional Euclidean space has an $A$-fixed point.
\end{Def}

If the pair $(G,A)$ has the property that the restriction map $\overline{H}^1(G,\pi)\rightarrow\overline{H}^1(A,\pi\vert_A)$ is zero for every unitary representation $\pi$ of $G$, then $(G,A)$ has relative property $F\E$, as reduced cohomology coincides with unreduced one for finite-dimensional representations (see Remark \ref{finitedim}).

If one recalls the fact that $$O(n) \ltimes \E^n \hookrightarrow U(n) \ltimes \C^n \hookrightarrow O(2n) \ltimes \E^{2n},$$ then the following is elementary:

\begin{Lem}\label{CvsR}
The following conditions are equivalent:
\begin{enumerate}
\item The group pair $(G,A)$ has relative property $F\E$;
\item For every finite dimensional orthogonal representation $\f : G \to O(n)$ and $b: G \to \E^n$ a cocycle over $\f$ then, $b|_A$ is bounded.
\item For every finite dimensional unitary representation $\pi : G \to U(n)$ and $b: G \to \C^n$ a cocycle over $\f$ then, $b|_A$ is bounded.
\end{enumerate}
\end{Lem}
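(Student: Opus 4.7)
My plan is to prove the three conditions equivalent via the cycle $(1) \Leftrightarrow (2) \Leftrightarrow (3)$, using the cocycle--affine action dictionary from Section 2 together with the displayed chain of embeddings.

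For $(1) \Leftrightarrow (2)$, an isometric $G$-action on $\E^n$ is exactly a pair $(\varphi, b)$ with $\varphi : G \to O(n)$ and $b \in Z^1(G, \varphi)$, and a vector $v \in \E^n$ is an $A$-fixed point iff $b(a) = v - \varphi(a)v$ for every $a \in A$, i.e.\ iff $b|_A \in B^1(A, \varphi|_A)$. So the task reduces to showing that in finite dimension a cocycle is bounded iff it is a coboundary. One direction is obvious since $\|v - \varphi(a)v\| \le 2\|v\|$. For the converse, if $b|_A$ is bounded then the $A$-orbit of $0$ under the associated affine action is a bounded subset of $\E^n$; its unique circumcenter (the center of its smallest enclosing ball) is preserved by every affine isometry stabilizing the orbit, and so is an $A$-fixed point. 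This is exactly the finite-dimensional Euclidean analogue of the argument behind Proposition 2.2.9 of \cite{BHV}, and is also consistent with Remark \ref{finitedim}, which tells us that in finite dimension $B^1$ is already closed in $Z^1$.

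For $(2) \Leftrightarrow (3)$ I would just transport cocycles along the chain $O(n)\ltimes \E^n \hookrightarrow U(n)\ltimes \C^n \hookrightarrow O(2n)\ltimes \E^{2n}$. The first inclusion turns an orthogonal pair $(\varphi,b)$ with values in $(O(n),\E^n)$ into a unitary pair with values in $(U(n),\C^n)$ with the same cocycle norms, giving $(3)\Rightarrow (2)$. The second inclusion realizes a unitary pair with values in $(U(n),\C^n)$ as an orthogonal pair with values in $(O(2n),\E^{2n})$, again isometrically, giving $(2)\Rightarrow (3)$. Since both embeddings are isometric group homomorphisms, both the cocycle identity and boundedness of the restriction to $A$ pass through unchanged.

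No step is really delicate; the one thing worth being explicit about is the circumcenter argument for $(1) \Leftrightarrow (2)$, since I want to use it directly in the Euclidean setting rather than cite BHV's unitary statement. If one prefers to avoid the duplication, one can instead deduce the Euclidean case from the unitary case by pushing the action forward under the first embedding $O(n)\ltimes \E^n \hookrightarrow U(n)\ltimes \C^n$, applying Proposition 2.2.9 of \cite{BHV} to get a fixed vector in $\C^n$, and taking its real part to produce the desired fixed point in $\E^n$.
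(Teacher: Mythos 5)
Your proposal is correct and follows exactly the route the paper intends: the paper offers no written proof beyond recalling the chain $O(n)\ltimes \E^n \hookrightarrow U(n)\ltimes \C^n \hookrightarrow O(2n)\ltimes \E^{2n}$ and declaring the lemma elementary, and your $(2)\Leftrightarrow(3)$ is precisely the transport of cocycles along those two embeddings. Your $(1)\Leftrightarrow(2)$ correctly supplies the remaining ingredient --- that a cocycle is a coboundary iff it is bounded, via the circumcenter of a bounded orbit --- which is the finite-dimensional instance of the fact the paper already invokes from Proposition 2.2.9 of \cite{BHV} in Section 2.
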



\begin{Thm}\label{main}
Let $A \norm G$ where $A$ is a compactly generated closed subgroup. The following are equivalent:
\begin{enumerate}
\item $(G, A)$ has both weak-relative property $(H_T)$ and relative $F\E$.
\item For every unitary $G$-representation $\pi$ the restriction map  $\~H^1(G, \pi) \to \~H^1(A, \pi|_A)$ is zero.
\end{enumerate}
\end{Thm}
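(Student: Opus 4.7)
The implication $(2) \Rightarrow (1)$ is the easy half. Weak-relative $(H_T)$ is vacuously satisfied, since under (2) its premise (non-vanishing of the reduced restriction map) never occurs. For relative $F\E$, I would use Lemma \ref{CvsR} to reduce to a finite-dimensional unitary $\pi: G \to U(n)$ with cocycle $b$; since $\overline{H}^1 = H^1$ in finite dimensions (Remark \ref{finitedim}), assumption (2) forces $[b|_A] = 0$ in unreduced $H^1(A,\pi|_A)$, so $b|_A$ is a coboundary and in particular bounded.

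For $(1) \Rightarrow (2)$, fix a unitary representation $(\pi,\HH)$ of $G$ and $b \in Z^1(G,\pi)$; the goal is to show $[b|_A] = 0$ in $\overline{H}^1(A,\pi|_A)$. Since $A \norm G$, the subspace $\HH^A$ is $G$-invariant, so I split $\pi = \pi_0 \oplus \pi_\perp$ on $\HH^A \oplus (\HH^A)^\perp$, with $\pi_0$ trivial on $A$ and $\pi_\perp$ carrying no non-zero $A$-invariant vector. Splitting $b = b_0 + b_\perp$ accordingly, weak-relative $(H_T)$ applied to $\pi_\perp$ forces the image of the restriction map on $\overline{H}^1(G,\pi_\perp)$ to vanish, so $[b_\perp|_A] = 0$. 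On the trivial $A$-module $\HH^A$ one has $B^1(A,\pi_0|_A) = 0$, hence $\overline{H}^1(A,\pi_0|_A)$ is identified with $\hom(A,\HH^A)$, and the problem reduces to showing that $\varphi := b_0|_A: A \to \HH^A$ is identically zero. The cocycle identity (\ref{cocy}), combined with the facts that $gag^{-1} \in A$ and $A$ acts trivially on $\HH^A$, yields $\varphi(gag^{-1}) = \pi(g)\varphi(a)$, so $\varphi$ is $G$-equivariant and $\varphi(A)$ is $G$-stable.

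The key step is to finite-dimensionalize. Since $\varphi$ kills $\overline{[A,A]}$, it factors through the abelianization $A_{ab}$, a compactly generated locally compact abelian group. The structure theorem gives $A_{ab} \cong \R^a \times \Z^b \times K$ with $K$ compact; as $(\HH^A,+)$ contains no non-trivial compact subgroup, the induced map kills $K$, and any continuous homomorphism $\R^a \times \Z^b \to \HH^A$ has image in the $\R$-span of the images of the $a+b$ standard generators. Consequently $V := \text{span}_{\R}(\varphi(A))$ is finite-dimensional, and $G$-equivariance of $\varphi$ makes $V$ a $G$-invariant subspace. The orthogonal projection $p_V \circ b_0 : G \to V$ is then a cocycle for the finite-dimensional unitary representation $\pi|_V$ whose restriction to $A$ equals $\varphi$. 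Relative $F\E$ produces $v \in V$ fixed by the associated affine $A$-action; since $A$ acts trivially on $V$, this fixed-point condition reads $\varphi(a) = 0$ for every $a \in A$, completing the argument.

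The main obstacle is exactly this finite-dimensional reduction: without compact generation of $A$, one could build $\varphi: \bigoplus_{\N} \Z \to \ell^2$ with dense infinite-dimensional image, in which case both the structure theorem and the subsequent application of relative $F\E$ would collapse. Compact generation is precisely the hypothesis that, through the structure theorem, cuts the abelianization down to $\R^a \times \Z^b$ (after quotienting by the compact summand) and thereby lets relative $F\E$ do its work.
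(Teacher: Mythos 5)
Your proposal is correct and follows essentially the same route as the paper's proof: decompose $\pi=\pi^A\oplus\pi^\perp$ using normality of $A$, dispose of $\pi^\perp$ via weak-relative $(H_T)$, use the structure theorem for the compactly generated locally compact abelian group $A/\overline{[A,A]}$ together with the equivariance $b(gag^{-1})=\pi(g)b(a)$ to trap $b(A)$ in a finite-dimensional $\pi(G)$-invariant subspace, and finish with relative $F\E$. The only (immaterial) differences are that you quote the full structure theorem $\R^a\times\Z^b\times K$ rather than the open-subgroup version used in the paper, and you phrase the conclusion as $\varphi=0$ via a fixed point rather than as $b^0|_A$ being a coboundary.
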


\begin{proof} $(1) \implies (2)$: By normality of $A$ we have a decomposition of the representation $\pi= \pi^A\oplus \pi^\perp$ where $\pi^A$ and $\pi^\perp$ are the $G$-representations corresponding to the $A$-invariant vectors and its orthogonal complement. It is simple to show that the compact-open topology on $Z^1(G, \pi)$ gives the decomposition $\~B^1(G, \pi) = \~B^1(G, \pi^A) \oplus \~B^1(G, \pi^\perp)$.

We therefore have that $\~H^1(G, \pi)=\~H^1(G, \pi^A)\oplus \~H^1(G, \pi^\perp)$. We must show that the restriction map vanishes on each of these components. By the assumption that $(G,A)$ has w-relative $(H_T)$, the restriction map on $\pi^\perp$ vanishes.

Consider now $b\in Z^1(G, \pi^A)$. Observe that $b|_A$ is a continuous homomorphism from the compactly generated group $A$ to the abelian group $(\HH^A, +)$. Hence $b|_A$ factors through the compactly generated, locally compact abelian group $B:=A/\overline{[A,A]}$. By the structure theorem for locally compact abelian groups (see Theorem 2.4.1 in \cite{Rudin}), $B$ contains an open subgroup $U$ of the form $K\times\R^m$, where $K$ is a compact abelian group. In our case $B/U$ is a finitely generated abelian group. Since $b|_K\equiv 0$, we see that $b(A)$ is contained in a finite dimensional subspace of $\HH$. So let ${\cal K}$ be the linear span of $b(A)$ in $\HH$. By formula (\ref{cocy}), we have $\pi(g)b(a) = b(g\cdot a)$ for every $g\in G,\,a\in A$, which means that $b(A)$ is $\pi(G)$ invariant, hence also ${\cal K}$ is. Denote by $\sigma$ the restriction of $\pi^A$ to ${\cal K}$, and by $\sigma^\perp$ the orthogonal of $\sigma$ in ${\cal H}^A$, so that $\pi^A=\sigma\oplus\sigma^\perp$. Let $b=b^0\oplus b^\perp$ be the corresponding decomposition of $b$. Since $b(A)\subset{\cal K}$, we have $b^\perp|_A=0$; so it remains to show that $b^0|_A$ is a co-boundary; but since $\sigma$ is a finite-dimensional unitary representation of $G$, this follows from relative property $F\E$.

\medskip

$(2) \implies (1)$: Clear in view of the sentence following Definition \ref{relFE}. 
\end{proof}

\noindent
{\bf{Remark 1}:} If $G$ has property $(H_T)$ and $A$ is a closed normal subgroup contained in $\~{[G,G]}$, it is easy to see that the pair $(G,A)$ has both the weak-relative property $(H_T)$ and the relative property $F\E$ (compare with Theorem \ref{HT}).

\noindent
{\bf{Remark 2:}} Groups whose first reduced cohomology always vanish, were considered by Shalom \cite{ShaInv}, in the compactly generated case, and were also studied by de Cornulier for non-compactly generated locally nilpotent groups \cite{deCor}. This property of the vanishing of all first reduced cohomology is also called property $\~{FH}$. Hence, the group-pair property we study here, where the restriction map on the first reduced cohomology always vanishes, can be said to be relative property  $\~{FH}$. However, we felt that there are already many definitions in this article and that this one would not improve our exposition. Nevertheless, we rephrase our main theorem in this terminology:

\begin{MainTh}[\ref{main}']
Let $A \norm G$ where $A$ is a compactly generated closed subgroup. The group pair $(G,A)$ has relative property $\overline{FH}$ if and only if it has both relative property $F\E$ and weak relative $(H_T)$.
\end{MainTh}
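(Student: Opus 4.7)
\textbf{Proof proposal for Theorem \ref{main}${}'$.} The plan is to prove both implications by exploiting the normality of $A$ to split off the $A$-invariant vectors. Since $A\triangleleft G$, the subspace $\mathcal{H}^A$ is closed and $G$-stable, giving a decomposition $\pi=\pi^A\oplus\pi^\perp$ as unitary $G$-representations. A short check (using that the compact-open topology is compatible with orthogonal projection) yields a matching decomposition $\overline{H}^1(G,\pi)=\overline{H}^1(G,\pi^A)\oplus\overline{H}^1(G,\pi^\perp)$, so it suffices to show the restriction map vanishes on each summand.

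For $(1)\Rightarrow(2)$, the summand $\pi^\perp$ is immediate: it has no non-zero $A$-invariant vectors by construction, so weak relative $(H_T)$ forces its restriction map to vanish. The substantive work is with the $\pi^A$ summand, and I would proceed as follows. Because $A$ acts trivially on $\mathcal{H}^A$, any $b\in Z^1(G,\pi^A)$ restricts on $A$ to a continuous homomorphism into the additive abelian group $(\mathcal{H}^A,+)$. This homomorphism factors through the abelianization $B:=A/\overline{[A,A]}$, which is a compactly generated locally compact abelian group since $A$ is compactly generated. Invoking the structure theorem (an open subgroup of $B$ has the form $K\times\mathbb{R}^m$ with $K$ compact, and the quotient is finitely generated abelian), the image $b(A)$ must lie in a finite-dimensional subspace $\mathcal{K}\subseteq\mathcal{H}^A$. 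Next I apply the cocycle identity \eqref{cocy} with $h\in A$ to get $b(gag^{-1})=\pi(g)b(a)$ (since $gag^{-1}\in A$ acts trivially on $\mathcal{H}^A$), which shows $b(A)$ is $G$-stable, hence so is $\mathcal{K}$. Restricting $\pi^A$ to $\mathcal{K}$ produces a finite-dimensional unitary $G$-representation $\sigma$, and the corresponding component $b^0$ of $b$ is a cocycle for $\sigma$. Relative $F\E$ now yields that $b^0|_A$ is bounded; by Remark \ref{finitedim}, reduced and unreduced cohomology coincide for $\sigma$, so $b^0|_A$ is actually a coboundary on $A$. The orthogonal component $b^\perp|_A$ vanishes identically (since $b(A)\subset\mathcal{K}$), so $b|_A$ represents the zero class in $\overline{H}^1(A,\pi^A|_A)$.

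For $(2)\Rightarrow(1)$ the argument is essentially formal. Weak relative $(H_T)$ holds vacuously: no unitary representation has non-vanishing restriction map, so there is nothing to prove. Relative $F\E$ follows by applying hypothesis (2) to any finite-dimensional unitary $G$-representation $\pi$ where, again by Remark \ref{finitedim}, reduced cohomology equals ordinary cohomology, and then using Lemma \ref{CvsR} to translate the boundedness of $b|_A$ into an $A$-fixed point for the associated affine action.

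The key technical obstacle, and the point where the compact generation hypothesis on $A$ is indispensable, is step (b)--(c) above: confining the image of $b|_A$ to a finite-dimensional $G$-invariant subspace of $\mathcal{H}^A$. Without compact generation, the abelianization of $A$ need not admit the simple $K\times\mathbb{R}^m$ description, $b(A)$ could be infinite-dimensional, and the reduction to a finite-dimensional $\sigma$ on which relative $F\E$ applies would break down.
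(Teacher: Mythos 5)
Your proposal is correct and follows essentially the same route as the paper's proof of Theorem \ref{main}: the same splitting $\pi=\pi^A\oplus\pi^\perp$, the same use of weak relative $(H_T)$ on $\pi^\perp$, the same reduction of $b|_A$ to a finite-dimensional $G$-invariant subspace via the structure theorem for the compactly generated abelianization of $A$ and the cocycle identity, and the same formal converse. Nothing to add.
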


\section{Bounded Generation}

Again, let us assume that $A\norm G$.  Let $a^g = gag^{-1}$ denote the $g$-conjugate of $a$ and $a^G$ the full $G$-orbit.

\begin{Def}
Let $A \norm G$ be a closed normal subgroup. We say that $A$ is cc-boundedly generated in $G$ if $A$ is the product of finitely many $G$-conjugacy classes, i.e. for some $k$, there exists $a_1, \dots, a_k \in A$ so that $A = a_1^G \cdots a_k^G$.
\end{Def}

\begin{Prop}\label{bddgen}
If $A \norm G$ is cc-boundedly generated and $(G, A)$ has weak-relative $(H_T)$ then every restriction map in reduced cohomology vanishes.
\end{Prop}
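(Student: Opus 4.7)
The plan is to adapt the argument of Theorem \ref{main}, using cc-bounded generation to replace the combined role of relative property $F\E$ and the structure theorem for locally compact abelian groups. (Note that cc-bounded generation does not obviously imply that $A$ is compactly generated, so invoking Theorem \ref{main} as a black box is not automatic; but its proof strategy carries over.) Given a unitary $G$-representation $\pi$ on $\HH$ and a cocycle $b \in Z^1(G,\pi)$, I want to show $[b|_A] = 0$ in $\overline{H^1}(A, \pi|_A)$.

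First, I split $\pi = \pi^A \oplus \pi^\perp$ using $G$-invariance of $\HH^A$ (from normality of $A$), and decompose $b = b^A \oplus b^\perp$ accordingly; as verified in the proof of Theorem \ref{main}, this splits reduced cohomology compatibly with restriction. Since $\pi^\perp$ has no $A$-invariant vectors, weak-relative $(H_T)$ applied to $\pi^\perp$ forces the restriction map on $\overline{H^1}(G, \pi^\perp)$ to vanish, giving $[b^\perp|_A] = 0$. It remains to handle $b^A|_A$.

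The heart of the proof is to show $b^A|_A \equiv 0$. Triviality of $\pi^A|_A$ makes $b^A|_A$ a continuous group homomorphism into $(\HH^A,+)$. Applying formula (\ref{cocy}) with $h = a \in A$ and using $\pi^A(gag^{-1}) = \Id$ collapses it to $b^A(gag^{-1}) = \pi^A(g) b^A(a)$, so unitarity of $\pi^A(g)$ yields $\|b^A(gag^{-1})\| = \|b^A(a)\|$: the norm of $b^A$ is constant on each $G$-conjugacy class inside $A$. Writing an arbitrary $a \in A$ as a product $(g_1 a_1 g_1^{-1}) \cdots (g_k a_k g_k^{-1})$ via cc-bounded generation and invoking the homomorphism property produces the uniform bound $\|b^A(a)\| \leq \sum_{i=1}^k \|b^A(a_i)\|$, independent of $a$.

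Finally, a bounded continuous homomorphism from any group into a Hausdorff topological vector space must vanish identically: for any $a \in A$, the estimate $\|b^A(a^n)\| = |n|\,\|b^A(a)\|$ must remain bounded in $n \in \Z$, which forces $b^A(a)=0$. Hence $b^A|_A \equiv 0$, and combined with $[b^\perp|_A] = 0$ one gets $[b|_A] = 0$. The main conceptual step --- and essentially the only non-routine one --- is recognising that, after isolating the $A$-invariant part, the cocycle becomes a $G$-equivariant homomorphism on $A$, at which point cc-bounded generation converts conjugation-equivariance into a global norm bound, obviating the relative $F\E$ hypothesis needed in Theorem \ref{main}.
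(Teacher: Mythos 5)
Your proposal is correct and follows essentially the same route as the paper's proof: the same decomposition $\pi=\pi^A\oplus\pi^\perp$, weak-relative $(H_T)$ disposing of the $\pi^\perp$ part, the identity $b(a^g)=\pi^A(g)b(a)$ from formula (\ref{cocy}), and the telescoping norm bound $\|b(a)\|\leq\sum_i\|b(a_i)\|$ from cc-bounded generation. The only difference is that you spell out the final step (a bounded homomorphism into $(\HH^A,+)$ vanishes) which the paper leaves implicit.
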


\begin{proof}
Let $\pi$ be a unitary representation of $G$. Then as before, normality gives us $$\~H^1(G, \pi) = \~H^1(G, \pi^A) \oplus \~H^1(G, \pi^\perp).  $$

The second summand of the restriction map must of course be zero. Let us show the same for the first. Since $\overline{H}^1(A,\pi^A)=H^1(A,\pi^A)=\hom(A,{\cal H}^A)$, it is enough to see that the restriction map $H^1(G,\pi^A)\rightarrow H^1(A,\pi^A|_A)$ is zero. To see that, consider $b \in Z^1(G, \pi^A)$. By equation (\ref{cocy}) we have for any $g\in G$ and $a\in A$

$$b(a^{g})=\pi^A(g)b(a).$$

Let $a_1, \dots, a_k \in A$ such that $A = a_1^G \dots a_k^G$ and $M =  \Sum{i=1}{k} \|b(a_i)\|$. Take $a\in A$ an arbitrary element. Then, there exists $g_i \in G$ so that $a = a_1^{g_1}\dots a_k^{g_k}$. Using the previous calculation, we have

\begin{eqnarray*}
\|b(a)\| &\leq& \Sum{i=1}{k} \|b(a_i^{g_i})\|\\
&=& \Sum{i=1}{k} \|\pi^A({g_i})b(a_i)\|\\
&=& \Sum{i=1}{k} \|b(a_i)\|=M,
\end{eqnarray*}
\noindent
which is uniformly bounded over $A$.
\end{proof}


\subsection{Amenable normal subgroups}

In this subsection, we give an alternative version of Proposition \ref{bddgen} for uniformly bounded representations.

\begin{Lem}\label{Semidir} Let $A\norm G$ be cc-boundedly generated in $G$ and $\pi$ be a uniformly bounded representation of $G$ on a Banach space $B$, which is trivial on $A$. Any 1-cocycle $b\in Z^1(G,\pi)$ is bounded on $A$.
\end{Lem}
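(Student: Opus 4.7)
The plan is to observe that the triviality of $\pi$ on $A$ makes $b|_A$ essentially a homomorphism, and then combine this with the conjugation formula (\ref{cocy}) exactly as in the proof of Proposition \ref{bddgen}, except that now we must absorb the factor $\|\pi(g_i)\|$ using the uniform bound on $\pi$.

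First I would note that since $\pi(a) = \Id$ for every $a \in A$, the cocycle relation $b(a a') = b(a) + \pi(a) b(a')$ reduces to $b(aa') = b(a) + b(a')$ on $A$. In other words, $b|_A : A \to (B,+)$ is a continuous group homomorphism. Next, for $g \in G$ and $a \in A$, the element $gag^{-1}$ lies in $A$ (by normality), so $\pi(gag^{-1}) = \Id$; plugging into formula (\ref{cocy}) gives
$$b(a^g) \;=\; (1-\pi(gag^{-1}))\,b(g) + \pi(g)\,b(a) \;=\; \pi(g)\,b(a).$$
Letting $C := \sup_{g \in G} \|\pi(g)\| < \infty$, we therefore get $\|b(a^g)\| \leq C\|b(a)\|$ for all $g \in G$ and $a \in A$.

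Now I would use cc-bounded generation. Choose $a_1, \dots, a_k \in A$ with $A = a_1^G \cdots a_k^G$, and set $M := \sum_{i=1}^k \|b(a_i)\|$. Any $a \in A$ factors as $a = a_1^{g_1} \cdots a_k^{g_k}$ for suitable $g_i \in G$. Since $b|_A$ is a homomorphism, we obtain
$$\|b(a)\| \;=\; \bigl\|\textstyle\sum_{i=1}^k b(a_i^{g_i})\bigr\| \;\leq\; \sum_{i=1}^k \|\pi(g_i) b(a_i)\| \;\leq\; C\sum_{i=1}^k \|b(a_i)\| \;=\; CM,$$
which is uniform in $a \in A$. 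This bound is independent of the factorization, completing the argument.

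There is no serious obstacle here: the only place where the hypothesis of uniform boundedness (as opposed to boundedness in the unitary setting of Proposition \ref{bddgen}) intervenes is in absorbing the operator norms $\|\pi(g_i)\|$ into the constant $C$. If one wanted instead to weaken ``uniformly bounded'' to an arbitrary continuous representation, the argument would fail at this step, since the $g_i$ vary with $a$ and cannot be controlled by a fixed compact subset of $G$.
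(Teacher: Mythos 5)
Your proof is correct and is exactly the argument the paper intends: the paper's proof of Lemma \ref{Semidir} simply says ``the proof is the same as in Proposition \ref{bddgen},'' and you have carried out that adaptation, correctly identifying that the only new ingredient is absorbing the operator norms $\|\pi(g_i)\|$ into the uniform bound $C$.
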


\begin{proof} 
The proof is the same as in Proposition \ref{bddgen}.
\end{proof}
\medskip

\begin{Thm}\label{UB} Let $A\norm G$. Assume that $A$ is cc-boundedly generated in $G$, $A$ is amenable and $A$ has property $(H_T)$. Let $\pi$ be a uniformly bounded representation of $G$ on a Hilbert space ${\cal H}$. Then the restriction map $Rest_G^A: \overline{H}^1(G,\pi)\rightarrow\overline{H}^1(A,\pi|_A)$ is zero.
\end{Thm}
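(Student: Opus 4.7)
The plan is to reduce to the unitary setting on $A$, where property $(H_T)$ for $A$ can be applied directly, and then transfer the conclusion back via the unitarization conjugator; this parallels the proof of Proposition \ref{bddgen}, with Dixmier's unitarization playing the role of unitarity. Since $A$ is amenable and $\pi|_A$ is uniformly bounded on the Hilbert space $\HH$, by the Dixmier--Day unitarization theorem there is a bounded invertible $T : \HH \to \HH$ with $\pi_0(a) := T^{-1}\pi(a)T$ unitary for every $a \in A$. The map $c' \mapsto Tc'$ is a topological isomorphism $Z^1(A, \pi_0) \to Z^1(A, \pi|_A)$ that carries $\overline{B^1(A,\pi_0)}$ onto $\overline{B^1(A,\pi|_A)}$, so it suffices to show that for every $b \in Z^1(G,\pi)$, the cocycle $c := T^{-1}(b|_A) \in Z^1(A,\pi_0)$ lies in $\overline{B^1(A,\pi_0)}$.

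Now orthogonally decompose $\HH = \HH_0 \oplus \HH_0^\perp$ with $\HH_0 := \HH^A_{\pi_0}$, and split $c = c_1 + c_2$ accordingly. Then $c_2 \in Z^1(A, \pi_0|_{\HH_0^\perp})$, and $\pi_0|_{\HH_0^\perp}$ is a unitary representation of $A$ with no non-zero $A$-invariants; by property $(H_T)$ of $A$, $\overline{H}^1(A, \pi_0|_{\HH_0^\perp}) = 0$, so $c_2 \in \overline{B^1(A, \pi_0|_{\HH_0^\perp})}$. Since $\pi_0$ acts trivially on $\HH_0$, the map $c_1 : A \to \HH_0$ is a continuous homomorphism, and it remains to show that $c_1 \equiv 0$.

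This is the main step, and it is where the $G$-structure must enter (the chief obstacle: working only on $A$ one has no handle on $c_1$). Set $\sigma(g) := T^{-1}\pi(g)T$, so that $\sigma|_A = \pi_0$ is unitary and $\sup_{g \in G} \|\sigma(g)\| \leq \|T\|\,\|T^{-1}\|\,\sup_G\|\pi\| =: N$. Applying $T^{-1}$ to the cocycle identity $b(a^g) = (1 - \pi(a^g))b(g) + \pi(g)b(a)$ from (\ref{cocy}), with $a \in A$ and $g \in G$, gives
\[ c(a^g) = (1 - \pi_0(a^g))\,T^{-1}b(g) + \sigma(g)\,c(a). \]
Let $P_0$ be the orthogonal projection $\HH \to \HH_0$. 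Since $\pi_0(a^g)$ is unitary and fixes $\HH_0$ pointwise, it commutes with $P_0$ and $P_0(1 - \pi_0(a^g)) = 0$; projecting the displayed identity onto $\HH_0$ yields
\[ c_1(a^g) = P_0\,\sigma(g)\,c(a), \qquad \|c_1(a^g)\| \leq N\,\|c(a)\|. \]
By cc-bounded generation, any $a \in A$ admits a decomposition $a = a_1^{g_1} \cdots a_k^{g_k}$; using that $c_1$ is a homomorphism,
\[ \|c_1(a)\| \leq \sum_{i=1}^k \|c_1(a_i^{g_i})\| \leq N \sum_{i=1}^k \|c(a_i)\|, \]
a bound independent of $a$. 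Thus $c_1(A)$ is a bounded subgroup of the Hilbert space $\HH_0$, which forces $c_1 \equiv 0$. Therefore $c = c_2 \in \overline{B^1(A,\pi_0)}$; transporting back by $T$, $b|_A \in \overline{B^1(A,\pi|_A)}$, so the restriction map is zero on the class of $b$.
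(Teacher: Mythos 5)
Your proof is correct, and while it shares the paper's skeleton (unitarize $\pi|_A$ via amenability of $A$, split off the $\pi_0(A)$-invariant vectors, kill the complement with property $(H_T)$ of $A$, and kill the invariant part with cc-bounded generation via formula (\ref{cocy})), it differs from the paper's argument in one substantive organizational point. The paper decomposes at the level of $G$: after unitarizing $\pi|_A$ it writes $\mathcal{H}=\mathcal{H}^A\oplus\mathcal{H}^\perp$ and needs \emph{both} summands to be $\pi(G)$-invariant in order to split $\overline{H}^1(G,\pi)=\overline{H}^1(G,\pi^A)\oplus\overline{H}^1(G,\pi^\perp)$; since $\pi(G)$ is only uniformly bounded, the invariance of $\mathcal{H}^\perp$ is not automatic and the paper imports it from \cite{BFGM} (Proposition 2.6 and Corollary 2.8 there: the complement of the fixed vectors is preserved by the normalizer). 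You decompose only the restricted cocycle $c=c_1+c_2$ on $A$, where unitarity of $\pi_0|_A$ suffices, and you compensate in the bounded-generation step by inserting the projection $P_0$ and estimating $\|c_1(a^g)\|=\|P_0\sigma(g)c(a)\|\leq N\|c(a)\|$ rather than having $\sigma(g)$ preserve $\mathcal{H}_0$. What your route buys is self-containedness — no appeal to the \cite{BFGM} decomposition theory — at the cost of carrying the conjugator $T$ explicitly through the computation; what the paper's route buys is a cleaner statement (a direct sum decomposition of $\overline{H}^1(G,\pi)$ itself, reusable in Proposition \ref{bddgen} and Theorem \ref{main}).
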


\begin{proof} As $A$ is amenable, we may assume that $\pi|_A$ is a unitary representation. Let ${\cal H}^{\perp}$ be the orthogonal complement of the space ${\cal H}^A$ of $\pi(A)$-fixed vectors. As $A$ is normal in $G$, the space ${\cal H}^A$ is clearly $\pi(G)$-invariant.

We claim that ${\cal H}^{\perp}$ is $\pi(G)$-invariant. To see it, recall from \cite{BFGM} that, if $\rho$ is a uniformly bounded representation of a locally compact group $L$ on a super-reflexive Banach space $B$, there is a decomposition $B=B^{\rho(L)}\oplus B'$, where $B'$ is the annihilator of $(B^*)^{\rho^*(L)}$, the space of fixed vectors for the contragredient representation $\rho^*$ on the dual space $B^*$ (see Proposition 2.6 in \cite{BFGM}); moreover this decomposition is preserved by the normalizer of $\rho(L)$ in $GL(B)$ (Corollary 2.8 in \cite{BFGM}). We apply this with $B={\cal H},\,L=A$ and $\rho=\pi|_A$: as $\rho$ is unitary it identifies with its contragredient, so $B'={\cal H}^{\perp}$. As $A$ is normal in $G$, the claim follows.

As usual, denote by $\pi^A$ (resp. $\pi^{\perp}$) the restriction of $\pi$ to ${\cal H}^A$ (resp. ${\cal H}^{\perp}$). Then
$$\overline{H}^1(G,\pi)=\overline{H}^1(G,\pi^A)\oplus\overline{H}^1(G,\pi^{\perp}).$$

By lemma \ref{Semidir}, the restriction map $\overline{H}^1(G,\pi^A)\rightarrow\overline{H}^1(A,\pi^A|_A)$ is zero. On the other hand, since $A$ has property $(H_T)$ and $\pi^{\perp}|_A$ is unitary and has no non-zero fixed vector, we have $\overline{H}^1(A,\pi^{\perp}|_A)=0$.\end{proof}

\begin{Ex} Let $D=(\R_+^*)^n$ be identified with the diagonal subgroup of $GL_n(\R)$. Let $H$ be a closed subgroup of $D$ which projects surjectively onto each factor. Then $\R^n$ is boundedly generated by the $H$-orbits of the $2n$ vectors $\pm e_1,...,\pm e_n$ (where $e_1,...,e_n$ is the standard basis of $\R^n$). So both Proposition \ref{Haag} and Theorem \ref{UB} apply to the pair $(\R^n\rtimes H,\R^n)$.
\end{Ex}

We now move on to the proof of Corollary \ref{SL2}. We take the opportunity to generalize it.

\begin{Prop} Let $\Rr$ be a locally compact topological ring with unit and let $EL_2(\Rr)$ denote the group generated by 2-by-2  elementary matrices with entries in $\Rr$. Consider the group $G= EL_2(\Rr)\ltimes \Rr^2$, with abelian subgroup $A=\Rr^2$. For every uniformly bounded representation $\pi$ of $G$ on a Hilbert space, the restriction map $Rest_G^A: \overline{H}^1(G,\pi)\rightarrow\overline{H}^1(A,\pi|_A)$ is zero.
\end{Prop}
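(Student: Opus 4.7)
The plan is to deduce this proposition as a direct application of Theorem \ref{UB} to the pair $(G, A) = (EL_2(\mathcal{R}) \ltimes \mathcal{R}^2,\, \mathcal{R}^2)$. This requires checking four hypotheses: that $A$ is a closed normal subgroup of $G$, that $A$ is amenable, that $A$ has property $(H_T)$, and that $A$ is cc-boundedly generated in $G$. Three of these are essentially free of charge: normality is built into the semidirect product; the additive group $(\mathcal{R}^2,+)$ is abelian, so it is amenable; and, being locally compact abelian (in particular nilpotent), it has property $(H_T)$ by Corollary 5.1.3 of \cite{ShaActa}, as recalled just before Theorem \ref{HT}.

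The content of the proof is the verification that $A$ is cc-boundedly generated in $G$, and this follows the same template as the example of $(\R^n \rtimes H, \R^n)$ given after Theorem \ref{UB}. I take $a_1 = e_1 = (1,0)$ and $a_2 = e_2 = (0,1)$, the standard basis vectors of $\mathcal{R}^2$. Conjugation by $(h,0) \in G$ acts on $A$ via the matrix action of $h \in EL_2(\mathcal{R})$, so each $G$-conjugacy class $e_i^G$ contains the full $EL_2(\mathcal{R})$-orbit of $e_i$. Applying the elementary matrix $E_{21}(r) = \bigl(\begin{smallmatrix}1 & 0 \\ r & 1\end{smallmatrix}\bigr)$ to $e_1$ gives $(1,r)$, so $e_1^G \supseteq \{(1,r) : r \in \mathcal{R}\}$; symmetrically, using $E_{12}(s)$ on $e_2$, one gets $e_2^G \supseteq \{(s,1) : s \in \mathcal{R}\}$. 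For an arbitrary $(x,y) \in \mathcal{R}^2$ one then writes $(x,y) = (1, y-1) + (x-1, 1)$, exhibiting $(x,y)$ as an element of $e_1^G + e_2^G$. Thus $\mathcal{R}^2 = e_1^G + e_2^G$, which is cc-bounded generation by two conjugacy classes.

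With all four hypotheses verified, Theorem \ref{UB} yields the vanishing of the restriction map $\overline{H}^1(G,\pi) \to \overline{H}^1(A,\pi|_A)$ for every uniformly bounded representation $\pi$ of $G$ on a Hilbert space. There is no real obstacle to overcome: once one recognizes that the conjugation action of the elementary matrices on the standard basis already sweeps out $\mathcal{R}^2$ as a sum of two orbits, the statement is an immediate corollary of Theorem \ref{UB}.
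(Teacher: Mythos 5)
Your proposal is correct and follows essentially the same route as the paper: the paper likewise reduces to Theorem \ref{UB} by noting that $A$ is abelian (hence amenable with property $(H_T)$) and then verifies cc-bounded generation by exhibiting $\mathcal{R}^2 = (0,1)^G + (1,0)^G$ via exactly the same elementary-matrix computation, writing $(a,b) = (a-1,1) + (1,b-1)$. The only cosmetic difference is that you spell out the amenability and $(H_T)$ hypotheses explicitly, which the paper dispatches with the single remark ``As $A$ is abelian.''
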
\label{loccompring}

\begin{proof} The subgroup $A$ is clearly closed and normal inside $G$. As $A$ is abelian, to apply Theorem \ref{UB} we need only show that it is cc-boundedly generated in $G$. We claim that $A = (0,1)^G +(1,0)^G$.

To this end, consider $(a, b) \in A$. Let $g_1 = \left(\begin{array}{cc}1 & a-1 \\0 & 1\end{array}\right)$ and $g_2 = \left(\begin{array}{cc}1 & 0 \\b-1 & 1\end{array}\right)$. Then

$$(0,1)^{g_1} = g_1 \cdot (0,1) = (a-1, 1) $$ and $$ (1,0)^{g_2} = g_2 \cdot (1,0) = (1,b- 1).$$

Therefore, $(a, b) = (0,1)^{g_1} + (1,0)^{g_2}$ and this completes the proof.
\end{proof}
\medskip

Following \cite{BFGM}, say that a locally compact group $G$ has property $(\overline{F}_{\cal H})$ if every uniformly bounded affine action (i.e. an affine action whose linear part is a uniformly bounded representation) on a Hilbert space, has a fixed point. It is an unpublished result by Y. Shalom (see however Remark 1.7.(3) in \cite{BFGM}) that higher rank simple Lie groups and their lattices have property $(\overline{F}_{\cal H})$. As a consequence, for $n\geq 3$ and for $\Rr=\R$ or $\Z$, the semi-direct product $G= SL_n(\Rr)\ltimes \Rr^n$  has property $(\overline{F}_{\cal H})$. To see it, let $\pi$ be a uniformly bounded representation of $G$ on a Hilbert space, with $\|\pi(g)\|\leq C$ for every $g\in G$, and $b\in Z^1(G,\pi)$; then for $h\in SL_n(\Rr)$ and $v\in \Rr^n$, we have $\|b(h\cdot v)\|\leq 2C\|b(h)\|+C\|b(v)\|$ by formula (\ref{cocy}). By Shalom's result, $b$ is bounded on $SL_n(\Rr)$, so that $b$ is bounded on every $SL_n(\Rr)$-orbit in $\Rr^n$. The conclusion follows as in the above proof.

\section{Distortion and Relative Property $F\E$.}

We discuss some examples of group pairs with relative property $F\E$. In this section, we will consider discrete finitely generated groups.

Let $G$ be finitely generated by $S = S^{-1} \subset G$. It is well known that this gives rise to a length function $l_S: G \to \N\cup\{0\}$ with the following properties: Let $g, h\in G$ and $1$ denote the identity. Then
\begin{itemize}
\item $l_S(1) = 0$,
\item $l_S(g) = l_S(g^{-1})$,
\item and $l_S(gh) \leq l_S(g)+l_S(h)$.
\end{itemize}

It is easy to see that, if $\pi$ is a unitary representation of $G$ and $b\in Z^1(G,\pi)$, then $\|b(g)\|\leq K.l_S(g)$, where $K=\max_{s\in S} \|b(s)\|$.

We now have a brief look at subgroup distortion, which is an interesting area of study in it's own right, see \cite{LMR}.

\begin{Def}
An element $a\in G$ is said to be undistorted if the word length of $a^n$ grows linearly in $n$. It is distorted otherwise.
\end{Def}

It is a simple exercise, based on the triangle inequality, that being (un)distorted is a conjugacy invariant.
We now recall a more classic notion of bounded generation:

\begin{Def} A subgroup $A\subset G$ is boundedly generated by finitely many distorted elements in $G$ if there are finitely many $a_1, \dots, a_k \in A$ so that $A=\< a_1\> \cdots \<a_k\>$ and $a_i$ is distorted in $G$ for each $i=1,...,k$.
\end{Def}

The main result of this section is:

\begin{Prop}\label{dist} Let $A$ be a subgroup of $G$. Each of the following two conditions imply relative property $F\E$.
\begin{enumerate}
\item $A$ is boundedly generated by finitely many distorted elements in  $G$.
\item $A$ is a normal subgroup, $(G,A)$ has weak relative $H_T$ and $A$ is generated by distorted elements in $G$.
\end{enumerate}
\end{Prop}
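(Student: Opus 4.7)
By Lemma \ref{CvsR}, it suffices to show that for every finite-dimensional unitary representation $\pi:G\to U(n)$ and every cocycle $b\in Z^1(G,\pi)$, the restriction $b|_A$ is bounded. My central technical step, used in both parts, is the following observation about distorted elements: \emph{if $a\in G$ is distorted, then $\{b(a^n)\}_{n\in\Z}$ is bounded}. Indeed, iterating the cocycle relation gives $b(a^n)=\sum_{k=0}^{n-1}\pi(a)^k b(a)$. Since $\pi(a)\in U(n)$ is diagonalizable with eigenvalues on the unit circle, $\C^n=\ker(\pi(a)-1)\oplus\mathrm{Im}(\pi(a)-1)$, and writing $b(a)=v_0+(\pi(a)-1)w$ accordingly, a telescoping sum yields $b(a^n)=nv_0+\pi(a)^n w-w$, so $\|b(a^n)-nv_0\|\le 2\|w\|$. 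On the other hand, for any finite generating set $S$ of $G$, $\|b(a^n)\|\le (\max_{s\in S}\|b(s)\|)\,l_S(a^n)$, and by distortion $l_S(a^n)/n\to 0$. Combining these forces $v_0=0$, so $b(a^n)=\pi(a)^nw-w$ is bounded.

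\textbf{Case (1).} Write $A=\langle a_1\rangle\cdots\langle a_k\rangle$ with each $a_i$ distorted in $G$. By the key step, for each $i$ there is $M_i<\infty$ with $\|b(a_i^n)\|\le M_i$ for all $n\in\Z$. For an arbitrary $a=a_1^{n_1}\cdots a_k^{n_k}\in A$, iterating the cocycle relation gives
\[
b(a)=\sum_{i=1}^{k}\pi\bigl(a_1^{n_1}\cdots a_{i-1}^{n_{i-1}}\bigr)b(a_i^{n_i}),
\]
and since $\pi$ is unitary, $\|b(a)\|\le\sum_{i=1}^k M_i$. Hence $b|_A$ is bounded.

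\textbf{Case (2).} Since $A\norm G$, the subspace $(\C^n)^A$ is $\pi(G)$-invariant, yielding the orthogonal $G$-decomposition $\pi=\pi^A\oplus\pi^\perp$ and a corresponding splitting $b=b_1+b_2$ with $b_1\in Z^1(G,\pi^A)$, $b_2\in Z^1(G,\pi^\perp)$. For $b_2$: the representation $\pi^\perp$ has no nonzero $A$-invariant vectors, so weak relative $(H_T)$ forces the restriction map $\~H^1(G,\pi^\perp)\to\~H^1(A,\pi^\perp|_A)$ to vanish; in finite dimensions reduced and unreduced cohomology coincide (Remark \ref{finitedim}), so $b_2|_A$ is a genuine coboundary, hence bounded. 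For $b_1$: since $\pi^A$ is trivial on $A$, $b_1|_A:A\to\C^n$ is a continuous homomorphism, so $b_1(a^n)=n\,b_1(a)$ for every $a\in A$. Applying the key step to any distorted generator $a\in A$ shows $b_1(a^n)$ is bounded, forcing $b_1(a)=0$. Because $A$ is generated by distorted elements and $b_1|_A$ is a homomorphism, $b_1|_A\equiv 0$. Thus $b|_A=b_2|_A$ is bounded.

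\textbf{Main obstacle.} The crux is the eigenspace/telescoping argument showing that linear growth of $b$ along $\langle a\rangle$ is the only obstruction to boundedness, combined with the use of subgroup distortion to rule it out. Everything else is routine (a unitary triangle inequality in (1), a standard $\pi^A\oplus\pi^\perp$ splitting plus application of weak relative $(H_T)$ in (2)).
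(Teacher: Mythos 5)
Your proof is correct, and while the global architecture matches the paper's (in case (1) you factor an arbitrary element of $A$ through the cyclic pieces and use the unitary triangle inequality; in case (2) you split $\pi=\pi^A\oplus\pi^\perp$, kill the $\pi^\perp$-part by weak relative $(H_T)$ plus Remark \ref{finitedim}, and kill the $\pi^A$-part by noting the restriction is a homomorphism vanishing on distorted generators), your key technical step is genuinely different from the paper's. The paper isolates this step as Lemma \ref{LMR} and proves it by citing Lubotzky--Mozes--Raghunathan: a distorted element is virtually unipotent in any finite-dimensional linear representation, hence of finite order $m$ in a unitary one, after which the homomorphism $b|_{\langle a^m\rangle}$ is killed by the distortion estimate; this yields the stronger conclusion that $\alpha(a)$ has finite order $m$ independent of the affine action, which the authors reuse in the discussion following the proposition. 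You instead prove only what the proposition needs --- boundedness of $b$ on $\langle a\rangle$ --- by an elementary and self-contained argument: the orthogonal splitting $\C^n=\ker(\pi(a)-1)\oplus\mathrm{Im}(\pi(a)-1)$ for the normal operator $\pi(a)$, the telescoping identity $b(a^n)=nv_0+\pi(a)^nw-w$, and the sublinearity $l_S(a^n)/n\to 0$ forcing $v_0=0$. This buys independence from the LMR machinery (no appeal to virtual unipotency or to linearity/torsion-freeness considerations) at the cost of the finite-order refinement; for Proposition \ref{dist} itself nothing is lost, and your route is arguably cleaner.
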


We remark that item 1 has  an analogous formulation in the case of isometric actions on complete CAT(0) spaces. In such a formulation, it would be a relative version of \cite[Lemma 8.1]{CapMon}.


\begin{Lem}\label{LMR} Let $\phi$ be a finite-dimensional unitary representation of $G$, and $a\in G$ be a distorted element. There exists $m\geq 1$ such that, for every affine isometric action $\alpha$ of $G$ with linear part $\f$, the isometry $\alpha(a)$ has finite order $m$.
\end{Lem}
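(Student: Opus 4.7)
My plan is to split the analysis of the affine isometry $\alpha(a)$ into its translational and rotational components, use distortion to kill the translational part, and then argue that the rotational part $\phi(a)$ itself must have finite order. Iterating the cocycle identity gives
$$\alpha(a)^k v = \phi(a)^k v + \sum_{i=0}^{k-1}\phi(a)^i b(a).$$
I would decompose $V = V_0 \oplus V_1$ with $V_0 = \ker(\phi(a) - \Id)$ and $V_1 = V_0^\perp$, and write $b(a) = b_0 + b_1$ accordingly. Since $\phi$ is finite-dimensional and unitary, $\phi(a) - \Id$ is invertible on $V_1$, so $\sum_{i=0}^{k-1}\phi(a)^i|_{V_1} = (\Id - \phi(a)^k)(\Id - \phi(a))^{-1}|_{V_1}$ is uniformly bounded in operator norm, while on $V_0$ the sum is just $k\cdot\Id$. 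Hence $\|b(a^k) - k b_0\| = O(1)$.

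Now the distortion hypothesis provides a subsequence $(n_j)$ with $l_S(a^{n_j}) = o(n_j)$; combined with the bound $\|b(g)\| \leq (\max_{s\in S}\|b(s)\|)\cdot l_S(g)$ recorded at the start of this section, this gives $\|b(a^{n_j})\| = o(n_j)$, forcing $b_0 = 0$. Consequently $b(a) \in V_1$, and one can solve $(\Id - \phi(a)) v = b(a)$ inside $V_1$ to produce a fixed point of $\alpha(a)$. Translating the origin to this fixed point, $\alpha(a)$ is conjugate to the orthogonal transformation $\phi(a)$, so the two have the same order; if $\phi(a)$ is of finite order, setting $m$ equal to that order will finish.

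The remaining step---showing $\phi(a)$ has finite order---is where I expect the main obstacle. For a character $\chi : G \to S^1$ the argument is clean: $\chi$ factors through $G^{ab}$, distortion passes to the quotient $G \to G^{ab}$, and in a finitely generated abelian group any element of infinite order is undistorted (its word length grows linearly), so $\chi(a)$ must be torsion. For higher-dimensional $\phi$, I would pass (after replacing $a$ by a suitable power) to the identity component of $\overline{\phi(G)} \subset U(n)$, land $\phi(a)$ inside a maximal torus $T$, and try to reduce the eigenvalues of $\phi(a)$ to the character case. The difficulty is that characters of $T$ need not descend to characters of $G$, so the one-dimensional argument does not apply mechanically; one likely invokes the Lubotzky--Mozes--Raghunathan principle that a distorted element of a linear group is virtually unipotent, together with the observation that a unipotent unitary element must equal $\Id$, to conclude $\phi(a)^m = \Id$ for some $m$.
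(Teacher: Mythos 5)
Your proof is correct, and its first half takes a genuinely different route from the paper's. The paper first invokes the Lubotzky--Mozes--Raghunathan result (the image of a distorted element under any finite-dimensional representation is virtually unipotent, and a unipotent element of $U(N)$ is the identity) to get $\phi(a)^m=\Id$, and only then kills the translation part: $b$ restricted to $\langle a^m\rangle$ is a homomorphism into $(\C^N,+)$, so $n\|b(a^m)\|=\|b(a^{mn})\|\le K\, l_S(a^{mn})$, which is $o(n)$ along a subsequence by distortion, forcing $b(a^m)=0$. You instead split $V=\ker(\phi(a)-\Id)\oplus V_1$ at the outset, use the uniformly bounded geometric series on $V_1$ together with distortion to kill the $V_0$-component of $b(a)$, and then solve $(\Id-\phi(a))v=b(a)$ inside $V_1$ to produce a fixed point of $\alpha(a)$ \emph{before} knowing anything about the order of $\phi(a)$. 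This buys you something the paper's argument does not: the fixed point, and hence the boundedness of $b$ on $\langle a\rangle$ --- which is all that the application in Proposition \ref{dist}(1) actually uses --- comes without any appeal to LMR. For the lemma as literally stated (a finite order $m$ uniform over all $\alpha$ with linear part $\phi$) you still must show $\phi(a)$ has finite order, and your tentative final step resolves to exactly the paper's argument: cite the LMR virtually-unipotent principle and observe that a unipotent unitary matrix is the identity. Your maximal-torus detour is indeed a dead end for the reason you give, but since you fall back on LMR the proof closes. Both arguments share the elementary inputs $\|b(g)\|\le K\,l_S(g)$ and the existence of a subsequence with $l_S(a^{n_j})=o(n_j)$ coming from the definition of distortion.
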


\begin{proof} By the proof of Proposition 2.4 in \cite{LMR}, the image of $a$ in any finite-dimensional representation $\rho$ of $G$, is virtually unipotent; this means that, for some $k\geq 1$, the matrix $\rho(a)$ is unipotent. Since the only unipotent element in $U(N)$ is the identity, we see that $\f(a)$ has finite order $m$. Let then $H$ be the subgroup generated by $a^m$, and let $b\in Z^1(G,\f)$ be the translation part of $\alpha$; then $b|_H$ is a homomorphism from $H$ to $\C^N$ and, for every $n\geq 1$, we have: $n\|b(a^m)\|=\|b(a^{mn})\|\leq K\cdot l_S(a^{mn})$. Since $a$ is distorted, this implies that $b(a^m)=0$, i.e $\alpha(a)$ has order $m$.
\end{proof}

\begin{proof}[Proof of Proposition \ref{dist} Item (1)]
Let us fix  $\f : G \to U(n)$ and $b \in Z^1(G, \f)$, and write $A= \<a_1\> \cdots \<a_k\>$, with $a_1,...,a_k$ distorted in $G$. By Lemma \ref{LMR}, the cocycle $b$ is bounded on each subgroup $\<a_i\>$, so it is bounded on $A$.
\end{proof}

\begin{proof}[Proof of Proposition \ref{dist} Item (2)] Again fix $\f$ a finite-dimensional unitary representation of $G$ on $V$, and $b\in Z^1(G,\f)$. As before, write $\f=\f^A\oplus\f^\perp$; let $b=b_0\oplus b^\perp$ be the corresponding decomposition of $b$. By the weak relative property $(H_T)$, the cocycle $b^\perp$ is bounded on $A$. On the other hand, $b_0\vert_A$ is a homomorphism from $A$ to the additive group of $V^A$. By lemma \ref{LMR} applied to the representation $\f^A$, the homomorphism $b_0$ is bounded, hence zero, on the cyclic subgroup generated by any distorted element in $A$. Since $A$ is generated by distorted elements, $b_0\vert_A=0$.
\end{proof}

\subsection{On being generated by distorted elements}

Since finitely generated linear groups are virtually torsion free, to each affine isometric action $\alpha: G \to Isom(\E^n)$ we can associate a finite index subgroup $G_\alpha$ so that $\alpha(G_\alpha)$ is torsion free. In Lemma \ref{LMR}, we noted that any such $\alpha$ maps distorted elements to torsion elements. Hence any distorted element in $G_\alpha$ is in the kernel of $\alpha$.

Assume $A$ is generated by distorted elements in $G$. If it were true that this would imply that every finite index subgroup were generated by distorted elements as well, then Proposition \ref{dist} would hold under the mere assumption that $A$ is generated by distorted elements.

However, here is an example of a finitely generated group $G$ and a subgroup $A$ generated by finitely many distorted elements, such that the pair $(G,A)$ does not have relative property $F\E$.
Let $$H = \<x,y,z:[x,y]=z, [y,z]=[z,x]=1 \>$$ denote the discrete Heisenberg group. We will use the fact that $z$ is distorted in $H$. (Indeed, $[x^n,y^n]= z^{n^2}$ so the length of $z^{n^2}$ is at most $4n$.)

First observe that $H$ admits the dihedral group $D_4$ (the isometries of a square) as a quotient: Let $s, t$ be the reflections in the plane $\R^2$ about two lines which intersect and have an angle of $\pi/4$. Also let $c$ be the rotation of angle $\pi$ about their intersection point, and observe that it is central. Then $x\mapsto s$, $y\mapsto t$, $z\mapsto c$ is a homomorphism of $H$ onto $D_4$.

Let $G$ be the free product of two copies $H_1$, and $H_2$ of $H$, and let $A$ be the subgroup generated by the corresponding elements $z_1$, $z_2$ (generating the center in each copy). As a group $A$ is the free group on 2 generators; clearly $A$ is generated by distorted elements.

We now claim that $(G,A)$ does not have relative $F\E$. To see it, construct $D_4$ in two ways, around two distinct points $P_1$, $P_2$ in the plane (defining as above $s_1$, $t_1$, $c_1$ and $s_2$, $t_2$, $c_2$). Map $H_1$ to the first $D_4$, and $H_2$ to the second one. This gives us an isometric action of $G$ on the plane. Now the image of the product $z_1z_2\in A$ is the product $c_1c_2$, which is a non-zero translation in the plane, so it has no fixed point.

\section*{Appendix, by Florian Martin and Alain Valette: {\it A question of Guichardet}}

Here is a quotation from Guichardet (p. 319 in \cite{Guic}), who considered
the case of a unitary $G$-module $V$ with $V^{N}=0$ (where $N$ is a normal subgroup of $G$): {\it ``On ignore
par exemple si $\overline{B}^{1}(N,\pi|_N)=Z^{1}(N,\pi|_N)$ implique
$\overline{B}^{1}(G,\pi)=Z^{1}(G,\pi)$''}. We shall see below that lots
of wreath products provide a negative answer to Guichardet's question,
even with $N$ abelian. This shows that there is no analogue of Proposition \ref{exactNormal} in reduced cohomology.
\medskip

Recall that the
wreath product of two discrete groups $H,\Gamma$, denoted by $H\wr \Gamma$, is the semi-direct
product of $\oplus_{\Gamma}H$ by $\Gamma$, acting by shifting the indices.

\begin{Thm}\label{wreath}
Let $H,\Gamma$ be two (non-trivial) countable groups with property (T), with $\Gamma$
infinite. Set $N=\oplus_{\Gamma}H$ and $G=H\wr\Gamma$. There exists a
unitary, irreducible representation $(\pi,V)$ of $G$ with $V^N=0$, $\overline{H^1}(G,\pi)\not=0$ and $\overline{H^1}(N,\pi\vert_N)=0$. In particular, the pair $(G,N)$ does not have relative $(H_T)$.
\end{Thm}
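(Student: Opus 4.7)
My strategy is to first produce \emph{some} unitary representation of $G$ with non-vanishing reduced $1$-cohomology and trivial $N$-fixed space, and then pass to an irreducible direct integral summand. The bulk of the work is in producing this initial representation and transferring all three properties to an irreducible piece.

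I would begin by noting that $G = H\wr\Gamma$ fails to have property (T) whenever $H\neq 1$ and $\Gamma$ is infinite---a classical folklore fact (provable, for instance, by constructing an unbounded $1$-cocycle on the Mackey-type representation $\mathrm{Ind}_N^G\sigma$, with $\sigma(n):=\rho(n_e)$ for some nontrivial irreducible representation $\rho$ of $H$). Shalom's theorem then supplies a unitary representation $\pi_0$ of $G$ with $\overline{H}^1(G,\pi_0)\neq 0$. Since $N\norm G$, I split $\pi_0=\pi_0^N\oplus\pi_0^\perp$. The summand $\pi_0^N$ factors through $G/N=\Gamma$, which has property (T), so $\overline{H}^1(G,\pi_0^N)=0$; the non-trivial class therefore survives on $\pi_0^\perp$, which moreover satisfies $(\pi_0^\perp)^N=0$. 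Replacing $\pi_0$ by $\pi_0^\perp$, I may assume $(\pi_0)^N=0$.

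To upgrade $\pi_0$ to an irreducible representation with the same two properties, I would decompose $\pi_0=\int_X^\oplus \rho_x\,d\mu(x)$ as a direct integral of irreducibles, and then invoke Proposition 2.6 of Chapter III of \cite{Gui} (the same tool used in the proof of Proposition \ref{baby}): the non-vanishing of $\overline{H}^1(G,\pi_0)$ forces a positive-measure set of $x$ with $\overline{H}^1(G,\rho_x)\neq 0$. Since $(\pi_0)^N=0$ also forces $(\rho_x)^N=0$ for $\mu$-a.e.\ $x$, any such $x$ produces an irreducible $\pi:=\rho_x$ with the two required properties.

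It remains to show $\overline{H}^1(N,\pi|_N)=0$. Write $N=\bigcup_F N_F$, where $F$ runs over the finite subsets of $\Gamma$ and $N_F:=\bigoplus_{\gamma\in F}H$. Each $N_F$ is a finite direct product of copies of the property (T) group $H$, hence itself has property (T); consequently $\overline{H}^1(N_F,\pi|_{N_F})=H^1(N_F,\pi|_{N_F})=0$. Since $N$ is discrete, the subgroups $N_F$ are open in $N$, so Lemma \ref{inductivelim} gives $\overline{H}^1(N,\pi|_N)=0$. The final claim then follows at once: $\pi$ certifies the failure of relative $(H_T)$ for $(G,N)$, having non-vanishing reduced cohomology but no nonzero $N$-invariant vector.

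The main obstacle is the first step---making the failure of property (T) for $H\wr\Gamma$ fully rigorous. A self-contained cocycle construction can be arranged, but the verification that the cocycle is not in $\overline{B}^1$ demands a careful Kazhdan-constant estimate on the $\Gamma$-factor; alternatively one can simply invoke the standard fact that $H\wr\Gamma$ is not Kazhdan when $\Gamma$ is infinite.
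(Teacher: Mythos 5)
Your overall strategy is sound and close in spirit to the paper's: both arguments rest on Shalom's theorem to produce a representation of the finitely generated group $G$ with $\overline{H}^1(G,\cdot)\neq 0$, and both prove $\overline{H}^1(N,\pi|_N)=0$ by writing $N$ as the increasing union of the Kazhdan groups $\bigoplus_{F} H$ ($F\subset\Gamma$ finite) and applying Lemma \ref{inductivelim}. Where you differ is in securing $V^N=0$ together with irreducibility: the paper invokes the irreducible form of Shalom's theorem directly and then argues by contradiction (if $V^N\neq 0$, irreducibility gives $V^N=V$, every cocycle is shown to factor through $\Gamma$, and property (T) for $\Gamma$ kills $\overline{H}^1(G,\pi)$), whereas you first discard the $N$-fixed part and only then disintegrate into irreducibles. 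Both routes work; yours costs an extra appeal to direct integral theory but avoids the contradiction argument.

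There is, however, one genuine gap. You claim $\overline{H}^1(G,\pi_0^N)=0$ because $\pi_0^N$ factors through $G/N=\Gamma$, which has property (T). That inference is invalid in general: a $G$-representation factoring through a Kazhdan quotient does not force $\overline{H}^1(G,\cdot)$ to vanish, since a cocycle $b\in Z^1(G,\pi_0^N)$ need not factor through $G/N$ --- its restriction to $N$ is an arbitrary continuous homomorphism $N\to (V^N,+)$. (For instance $G=\Z\times\Gamma$ with $N=\Z$ and the trivial one-dimensional representation: it factors through the Kazhdan group $\Gamma$, yet $H^1(G,\C)=\hom(G,\C)\neq 0$.) What rescues the step here --- and it is precisely the observation at the heart of the paper's own proof that $V^N=0$ --- is that $H$ has property (T), hence finite abelianization, so the abelianization of $N=\bigoplus_\Gamma H$ is a torsion group and every homomorphism from $N$ into a vector space vanishes. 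Only then does every cocycle in $Z^1(G,\pi_0^N)$ factor through $\Gamma$, giving $\overline{H}^1(G,\pi_0^N)=\overline{H}^1(\Gamma,\pi_0^N)=0$. Insert this observation and your argument closes. Two smaller points deserve a word as well: the finite generation of $G$ (it is generated by $\Gamma$ together with a single copy of $H$), needed to apply Shalom's theorem; and the passage from $(\pi_0)^N=0$ to $(\rho_x)^N=0$ for almost every $x$, which is standard since a measurable field of nonzero $N$-fixed subspaces over a positive-measure set would yield a nonzero $N$-fixed vector in the direct integral.
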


{\bf Proof:}
Since $\Gamma$ is infinite, it is known that $G$ does not have property (T) (see
Corollary 1 in \cite{CMV}). On the other hand, $G$ is finitely generated. By a result of Shalom
(\cite{ShaInv}, Thm 0.2 and remark p.30) there exists a unitary irreducible $G$-representation $(\pi,V)$
such that $\overline{H^1}(G,V)\not=0$.

Let $(F_n)_{n\geq 1}$ be an
increasing sequence of finite subsets of $\Gamma$, with $\Gamma=\bigcup_{n\geq 1}F_n$. Set
$N_n=\bigoplus_{F_n} H$, so that $N$ is the increasing union of the $N_n$'s. Since
$N_n$ has property (T), we have $\overline{H^1}(N_n,\pi\vert_{N_n})=0$, so by
lemma \ref{inductivelim} we get:
$\overline{H^1}(N,\pi\vert_N)=0$.\\ It remains to show that $V^N=0$. Assume by
contradiction that it is not the case. By irreducibility of $\pi$, this gives $V^N=V$ i.e.
the representation $\pi$ factors through $G/N=\Gamma$. Take $b\in Z^1(G,\pi)$: the restriction
$b\vert_N$ is a homomorphism from $N$ to the additive group of $V$; since $H$ has
property (T), the abelianization group of $H$ is finite, so the
abelianization of $N$ is a torsion group, and this implies $b\vert_N=0$. This shows that $b$ factors through $G/N=\Gamma$, i.e that not only the linear action of $G$ factors through $\Gamma$, but also the affine action associated to $b$. As a consequence $\overline{H^1}(G,\pi)=\overline{H^1}(\Gamma,\pi)$.  Since $\Gamma$
has property (T), we
have $\overline{H^1}(\Gamma,\pi)=0$, and this contradicts our choice of $\pi$.
\hfill$\square$

\begin{Ex} In the above Theorem, $\Gamma$ is assumed to be infinite,
but $H$ is allowed to be finite. So the simplest example of a wreath
product satisfying the assumptions of Theorem \ref{wreath} is
$\mathbb{Z}/2\mathbb{Z}\wr SL_{3}(\mathbb{Z})$.
\end{Ex}


\bibliography{FVbib}
\bibliographystyle{alpha}


\vspace{2cm}
\noindent Talia Fern\'os\\
Einstein Institute of Mathematics, Hebrew University\\
Jerusalem, 91904, Israel\\
E-mail: \url{fernos@math.huji.ac.il}
\vspace{1cm}

\noindent Alain Valette\\
Institut de Mathématiques - Université de Neuchâtel\\
Rue Emile Argand 11, CH-2007 Neuchâtel - Switzerland\\
E-mail: \url{alain.valette@unine.ch}
\vspace{1cm}

\noindent Florian Martin\\
Philip Morris International - R\&D Department\\
CH-2000 Neuch\^atel - Switzerland\\
E-mail: \url{florian.martin@pmintl.com}

\end{document}